\def\squareforqed{\hbox{\rlap{$\sqcap$}$\sqcup$}}
\def\qed{\ifmmode\else\unskip\quad\fi\squareforqed}
\def\smartqed{\def\qed{\ifmmode\squareforqed\else{\unskip\nobreak\hfil
\penalty50\hskip1em\null\nobreak\hfil\squareforqed
\parfillskip=0pt\finalhyphendemerits=0\endgraf}\fi}}
 \def\keywordname{{\bfseries Keywords}}%
\def\keywords#1{\par\addvspace\medskipamount{\rightskip=0pt plus1cm
\def\and{\ifhmode\unskip\nobreak\fi\ $\cdot$
}\noindent\keywordname\enspace\ignorespaces#1\par}}
\def\subclassname{{\bfseries Mathematics Subject Classification
(2000)}\enspace}
\def\subclass#1{\par\addvspace\medskipamount{\rightskip=0pt plus1cm
\def\and{\ifhmode\unskip\nobreak\fi\ $\cdot$
}\noindent\subclassname\ignorespaces#1\par}}
\newtheorem{theorem}{Theorem}
\newtheorem{proposition}{Proposition}
\newtheorem{lemma}{Lemma}
\newenvironment{proof}{\paragraph{Proof:}}{\itshape}{\rmfamily}
\numberwithin{equation}{section}
\newcommand{\R}{\mathbb{R}}
\newcommand{\ttF}{\mathtt{F}}
\newcommand{\ft}{\mathfrak{t}}
\newcommand{\sfT}{\mathsf{T}}
\newcommand{\sfH}{\mathsf{H}}
\newcommand{\OO}{\mathrm{O}}
\newcommand{\KK}{\mathbb{K}}
\newcommand{\C}{\mathbb{C}}
\newcommand{\AHerm}[1]{\mathrm{Skew}_{#1}}
\newcommand{\Flag}{\text{Flag}}
\newcommand{\St}[3]{\mathrm{St}_{#1,#2,#3}}  
\newcommand{\Gr}[3]{\mathrm{Gr}_{#1,#2,#3}}
\newcommand{\SSt}{\mathrm{St}}
\newcommand{\GGr}{\mathrm{Gr}}
\newcommand{\FFl}{\mathrm{Fg}}
\newcommand{\sfg}{\mathsf{g}}
\newcommand{\rM}{\mathrm{M}}
\newcommand{\tidY}{\tilde{Y}}
\newcommand{\UU}[2]{\mathrm{U}_{#1, #2}}
\newcommand{\lb}{\llbracket}
\newcommand{\rb}{\rrbracket}
\newcommand{\asym}[1]{\mathrm{skew}_{#1}}
\newcommand{\mrLambda}{\mathring{\Lambda}}
\newcommand{\mrY}{\mathring{Y}}
\newcommand{\mrZ}{\mathring{Z}}
\newcommand{\ccF}{\mathcal{F}}
\newcommand{\rmu}{\mathrm{u}}
\newcommand{\ba}{a}
\newcommand{\bb}{b}
\newcommand{\bd}{\mathbf{d}}
\newcommand{\bA}{A}
\newcommand{\bE}{E}
\newcommand{\bM}{M}
\newcommand{\bN}{N}
\newcommand{\bR}{R}
\newcommand{\bQ}{Q}
\newcommand{\bY}{Y}
\newcommand{\bZ}{Z}
\newcommand{\bS}{S}
\DeclareMathOperator{\ad}{ad}
\DeclareMathOperator{\diag}{diag}
\DeclareMathOperator{\Tr}{Tr}
\DeclareMathOperator{\Real}{Re}
\DeclareMathOperator{\TrR}{Tr_{\R}}
\DeclareMathOperator{\Exp}{Exp}
\DeclareMathOperator{\Hess}{Hess}
\DeclareMathOperator{\Log}{Log}
\DeclareMathOperator{\csr}{csr}
\DeclareMathOperator{\ssr}{ssr}
\DeclareMathOperator{\dI}{I}
\DeclareMathOperator{\frL}{L}
\begin{document}

\title{Closed-form geodesics and trust-region method to calculate Riemannian logarithms on Stiefel and its quotient manifolds
}


\author{Du Nguyen
}



\maketitle

\begin{abstract} We provide two closed-form geodesic formulas for a family of metrics on Stiefel manifold, parameterized by two positive numbers, having both the embedded and canonical metrics as special cases. The closed-form formulas allow us to compute geodesics by matrix exponential in reduced dimension for low-rank manifolds. Combining with the use of Fr{\'e}chet derivatives to compute the gradient of the square Frobenius distance between a geodesic ending point to a given point on the manifold, we show the logarithm map and geodesic distance between two endpoints on the manifold could be computed by {\it minimizing} this square distance by a {\it trust-region} solver. This leads to a new framework to compute the geodesic distance for manifolds with known geodesic formula but no closed-form logarithm map. We show the approach works well for Stiefel as well as flag manifolds. The logarithm map could be used to compute the Riemannian center of mass for these manifolds equipped with the above metrics. We also deduce simple trigonometric formulas for the Riemannian exponential and logarithm maps on the Grassmann manifold.
\keywords{Stiefel manifold \and Geodesic \and Computer Vision \and Flag manifold \and Logarithm map \and Riemannian center of mass \and Fr{\'e}chet derivative}
\subclass{65K10 \and 58C05 \and 49Q12 \and 53C25 \and 57Z20 \and 57Z25 \and 68T05 \and 68T45}
\end{abstract}

\section{Introduction}
\label{intro}
The Stiefel manifold, the manifold of orthogonal matrices of a given number of rows and columns, is important in various applications, including computer vision \cite{Turaga}, neural networks \cite{NishiAka}, statistics \cite{Baba}. It has two well-known metrics, the embedded metric and canonical metric defined in \cite{Edelman_1999}, which were treated separately in that foundational paper. While both have known exponential maps, allowing one to compute geodesic ending point when an initial position and velocity is given, they have no known closed-form logarithm map, which gives us a velocity vector with the minimum length that produces a geodesic connecting the initial point with an ending point. To compute statistics on Stiefel manifolds, for example, the Riemannian center of mass \cite{Karcher}, approximated geodesics are often used \cite{Baba}. There have been several research efforts to compute the logarithm map numerically, with \cite{Rentmee,Zimmer} treating the case of the canonical metric while \cite{Brynes,Sunda} treated the embedded metric. To our best knowledge, a common treatment of the logarithm map for both metrics is not yet available.

Recently, \cite{ExtCurveStiefel} introduced a family of metrics on the Stiefel manifold, parameterized by a real number that contains both the canonical and embedded metrics as special cases, with a closed-form geodesic formula. That geodesic formula requires calculating exponentials of matrices of dimension the longer size of a Stiefel matrix. Later, ignorance of \cite{ExtCurveStiefel}, in \cite{NguyenRiemann} we introduced essentially the same family of metrics, with a different parameterization. In this paper, we provide two new geodesics formulas for these metrics, generalizing both geodesics formulas in \cite{Edelman_1999}. In particular, the efficient formula for geodesics in Theorem 2.1 of \cite{Edelman_1999} generalizes to the whole family, including the embedded metric. The matrix exponentials required by our formulas depend on the shorter size of a Stiefel matrix, hence are efficient for low-rank matrices. A quick application of these formulas gives us new, trigonometric-like formulas for the exponential map and logarithm map of Grassmann manifolds (considered as a quotient manifold of Stiefel manifold), similar to those of the sphere.

For the Stiefel manifold, our generalized formulas for geodesics imply the search space for the logarithm map between given points could be identified with a Euclidean space with a substantially smaller dimension than the dimension of the manifold when matrices in the manifold are of low rank. We propose an approach to compute the logarithm map by {\it minimizing the square of the Frobenius distance between the ending point of a geodesic to a target point}. Thus, we treat the computation of the logarithm map as an {\it optimization problem with efficiently computed gradients}. In spirit, our approach is close to the shooting method of \cite{Brynes,SriKlass,Sunda}. The efficient evaluation of the gradient makes use of Fr{\'e}chet derivatives, (introduced with efficient computational procedures in \cite{Higham,Mathias,Havel}), bypassing the requirement for a partition of the time interval. The overall optimization problem is solved using a trust-region solver, with numerical Hessian (an L-BFGS-B solver is also likely to work).

The approach works well for all metrics in the family when the ending point of the geodesic is sufficiently close to the initial point, making this approach competitive for statistical problems, in particular the Riemannian center of mass \cite{Karcher}. For target points further from the initial point, the method is also robust, most of the time it returns a reasonable candidate for the logarithm map, generating a geodesic reaching the target point and has good potential for being length-minimizing. The objective function and its gradient for the canonical metric are simpler than those of other metrics in the family.

The application of Fr{\'e}chet derivatives to compute the gradient of the distance function could be generalized to other manifolds. To illustrate, we apply it to flag manifolds, introduced in the optimization literature in \cite{HelmkeMoore,Nishimori,YeLim}. From \cite{HelmkeMoore}, a flag manifold could be considered as the manifold of symmetric matrices with specified eigenvalues of given multiplicities (an {\it isospectral manifold}) and could be identified with a quotient space of a Stiefel manifold by a group of block-diagonal orthogonal matrices (the Grassmann manifold is a flag manifold with two eigenvalue blocks). Equipping a flag manifold with the quotient metric of a Stiefel metric in the family above, the flag manifold will have known geodesics but except for the Grassmann case, has no known logarithm map. Studying geodesics on a flag manifold is equivalent to studying geodesics on a Stiefel manifold, where the initial tangent direction satisfies a {\it horizontal condition}, while the ending point is required to reach not a target point but an element in its orbit. Our approach, minimizing the Euclidean distance in the symmetric matrix realization, with the help of Fr{\'e}chet derivatives, also works well in this case. For Grassmann manifolds, our numerical algorithm for the logarithm map matches the closed-form formula. For other flag manifolds, it also returns a reasonable candidate.

We will review basic notations in \cref{sec:notation}, prove the geodesic formulas for Stiefel manifolds in \cref{sec:stief_geo}, for Grassmann manifolds in \cref{sec:grassmann}. We review Fr{\'e}chet derivatives in \cref{sec:frechet} and present our algorithm for the logarithm map for Stiefel manifolds in \cref{sec:log_map_stief_all}. We review flag manifolds and report the results in \cref{sec:flag}. We finish with a calculation of the Riemannian center of mass, some notes on implementation, and a final discussion.
\section{Notations and background}\label{sec:notation}
Most of the results here are valid for both base fields $\KK=\R$ and $\KK=\C$, thus we denote $\KK$ to be either field if the results hold for both. We will denote $\KK^{n\times p}$ the space of matrices of $n$-rows and $p$-columns in $\KK$. We will denote the transpose $\ft$ to be the real transpose $\sfT$ if the base field $\KK$ is real, and the hermitian  transpose $\sfH$ if the base field $\KK$ is complex. If the base field is $\R$, the notation $\TrR A$ denotes the usual trace $\Tr A$ of a matrix $A$. If the base field is $\C$, $\TrR A$ denotes $\Real\Tr A$. The (real) inner product on a matrix space is $\Tr \ba\bb^{\sfT}$ if the base field is real, and $\TrR \ba\bb^{\sfH}$ if the base field is complex. We denote by $\|\|$ the usual Euclidean norm. We call a matrix $A$ $\ft$-antisymmetric if $A^{\ft} + A = 0$, $\ft$-symmetric if $A^{\ft} = A$. Define $\AHerm{\ft, p}$ to be the space of all $p\times p$ $\ft$-antisymmetric matrices, $\asym{\ft} A = \frac{1}{2}(A - A^{\ft})$ to be the anti-symmetrize operator. We denote the $\ft$-orthogonal group $\UU{\ft}{p}$ to be the group of matrices in $\KK^{p\times p}$ such that $U^{\ft}U = \dI_p$, thus $\UU{\sfT}{p}$ is $\OO(p)$, the real orthogonal group and $\UU{\sfH}{p}$ is the unitary group $\mathrm{U}(p)$.

On a Riemannian manifold $\rM$, a geodesic $Y(t)$ has the distance minimizing property and satisfies the geodesic equation of the form $\ddot{Y}(t) + \Gamma(\dot{Y}(t), \dot{Y}(t)) = 0$, where $\Gamma$ could be defined locally (by Christoffel symbols) or globally (by a Christoffel function, see \cite{Edelman_1999}). If $\tidY$ is a point on $\rM$ and $\eta$ is a tangent vector at $\tidY$, if $\bY(t)$ is a geodesic on $\rM$ with $\bY(0) = \tidY$ and $\dot{\bY}(0) = \eta$, we denote $\bY(1)$ by $\Exp_{\tidY}\eta$. Its inverse $\Log_{\tidY}\bZ$ denotes a vector $\eta$ of minimal length such that $\Exp_{\tidY}\eta=\bZ$. In general, $\Log$ may not exist, and may not be unique. However, as Stiefel and its quotient manifolds are compact, they are {\it complete}, and there is at least one geodesic connecting two points (the Hopf-Rinow theorem \cite{Gallier}, Theorem 16.17). We use the notation $\Exp^{\rM}$ to make explicit the manifold $\rM$ under consideration. Thus $\Exp^{\SSt}, \Exp^{\GGr}$ and $\Exp^{\FFl}$ denote the exponential maps on the Stiefel, Grassmann, and flag manifolds (see below). Similarly, we use the notation $\Log^{\rM}$ for the logarithm map.

We will denote by $\St{\KK}{p}{n}$ the Stiefel manifold over the field $\KK$, which consists of matrices $Y$ in $\KK^{n\times p}$ satisfying the relation $\bY^{\ft}\bY = \dI$. For $n=p$, $\St{\R}{n}{n}$ is $\OO(n)$, but we will focus on the component $\mathrm{SO}(n)$. The corresponding Grassmann manifold is denoted by $\Gr{\KK}{p}{n}$. Flag manifolds will be reviewed in \cref{sec:flag}. We use the notation $\lb Y \rb$ to denote the equivalent class of $Y$ (usually an element of the Stiefel manifold) in the quotient structure on the Grassmann or flag manifolds.

Fr{\'e}chet derivatives, the directional derivatives of a matrix function $f$ at point $\bA$ in direction $\bE$ will be denoted by $\frL_f(\bA, \bE)$. From the works \cite{Mathias,Havel} and especially \cite{Higham}, it could be computed very effectively. From our perspective, a calculation involving Fr{\'e}chet derivatives is as effective computationally as a closed-form solution, thus we can consider the gradient of the objective function discussed here computable in closed-form.

We will denote by $\csr(x)$ the entire function corresponding to $\cos x^{1/2}$, and $\ssr(x)$ the entire function corresponding to $x^{-1/2}\sin x^{1/2}$. These entire functions exist, as seen from their Taylor series expansions. They will be used to give analytic formulas for geodesics on Grassmann manifolds. We will denote by $\dI_{h, d}$ the $\R^{h\times d}$ matrix $\begin{bmatrix}\dI_{d\times d}\\ 0_{(h-d)\times d}\end{bmatrix}$ when $h\geq d$.
\section{Geodesics for Stiefel manifolds}\label{sec:stief_geo}
As noted, we will be working with matrix manifolds over a base field $\KK=\R$ or $\C$. Let $\St{\KK}{p}{n}\subset\KK^{n\times p}$ be the Stiefel manifold of matrices satisfying $Y^{\ft}Y = \dI_p$, let $\alpha_0, \alpha_1$ be two positive real numbers and $\bY\in \St{\KK}{p}{n}$. We define the operator-valued function
\begin{equation}\label{eq:stf_metric}
  \sfg\omega = \sfg_Y\omega := \alpha_0 \omega + (\alpha_1 - \alpha_0)\bY\bY^{\ft}\omega
\end{equation}
This gives us an inner product $\TrR\omega_1^{\ft}(\sfg_Y\omega_2)$ for $\omega_1, \omega_2$ in the ambient space $\KK^{n\times p}$, parametrized by $\bY\in\St{\KK}{p}{n}$. A tangent vector at $\bY$ could be considered as a matrix $\eta\in\KK^{n\times p}$ such that $\bY^\ft\eta$ is $\ft$-antisymmetric. The inner product induces a Riemannian metric on $\St{\KK}{p}{n}$. The case $\alpha_1 = \alpha_0$ corresponds to the embedded metric, the case $\alpha_1 = \frac{1}{2}\alpha_0$ corresponds to the canonical metric, both defined in \cite{Edelman_1999} for $\KK=\R$. Matrices of the form $\omega - YY^{\ft}\omega$ and $YY^{\ft}\omega$ are eigenvectors of $\sfg$ with eigenvalues $\alpha_0$ and $\alpha_1$, respectively. Thus $\sfg$ is positive definite.
The paper \cite{Edelman_1999} provided closed-form geodesics for the embedded and canonical metrics in separate formats. We generalize both results below:

\begin{theorem}\label{prop:st_geo} Let $\eta$ be a tangent vector to $\St{\KK}{p}{n}$ at a point $\tidY$. Let $\bA= \tidY^{\ft} \eta$. Let $\bQ$ be an orthogonal basis of the column span of $\eta - \tidY\tidY^{\ft}\eta$. Expressing $\eta - \tidY\tidY^{\ft}\eta = \bQ\bR$ in this basis. Let $\bS_0 := \eta^{\ft}\eta= -\bA^{2} + \bR^{\ft}\bR$ and $\alpha = \alpha_1/\alpha_0$, the geodesic equation for the metric $\sfg\omega = \alpha_0 \omega + (\alpha_1 - \alpha_0)\bY\bY^{\ft}\omega$
  in \cref{eq:stf_metric} is
  \begin{equation}  \label{eq:ageo}
    \ddot{\bY} +\bY\dot{\bY}^{\ft}\dot{\bY} + 2(1-\alpha)(\dI_n-\bY\bY^{\ft})(\dot{\bY}\dot{\bY}^{\ft})\bY=0\end{equation}
Each equation below describes the geodesic $\bY(t)$ with $\bY(0) = \tidY, \dot{\bY}(0) = \eta$:
  \begin{equation}
    \label{eq:st_geo1}
    \bY(t) = \begin{bmatrix} \tidY & \eta\end{bmatrix}\{\exp t\begin{bmatrix} (2\alpha-1)\bA & 2(\alpha-1)\bA^2-\bS_0 \\ \dI_p & \bA\end{bmatrix}\}\begin{bmatrix}\ \exp((1-2\alpha)t\bA) \\ 0\end{bmatrix}
  \end{equation}
  \begin{equation}
    \label{eq:st_geo2}
    \begin{gathered}
      \bY(t) = (\tidY \bM(t) + \bQ \bN(t))\exp((1-2\alpha)t\bA)
    \text{ with }\begin{bmatrix} \bM(t)\\ \bN(t) \end{bmatrix} = \exp t\begin{bmatrix}2\alpha \bA & -\bR^{\ft} \\ \bR & 0\end{bmatrix}\begin{bmatrix}\dI_p\\ 0\end{bmatrix}
      \end{gathered}
  \end{equation}
\end{theorem}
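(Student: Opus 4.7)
The plan is to verify that both \cref{eq:st_geo1} and \cref{eq:st_geo2} meet the three defining properties of a geodesic: the initial conditions $\bY(0) = \tidY$ and $\dot{\bY}(0) = \eta$, the Stiefel constraint $\bY(t)^\ft \bY(t) = \dI_p$, and the geodesic equation \cref{eq:ageo}. For the derivation of \cref{eq:ageo} itself, I would apply the variational principle to the energy $\tfrac{1}{2}\int \TrR \dot{\bY}^\ft \sfg_{\bY} \dot{\bY}\,dt$ subject to $\bY^\ft \bY = \dI_p$. Using that $\sfg_{\bY}$ has eigenvalues $\alpha_0$ on $(\dI_n - \bY\bY^\ft)\KK^{n\times p}$ and $\alpha_1$ on $\bY\bY^\ft\KK^{n\times p}$, computing the associated Christoffel function and imposing the tangency constraint yields the $\bY\dot{\bY}^\ft\dot{\bY}$ term plus the $2(1-\alpha)(\dI_n - \bY\bY^\ft)\dot{\bY}\dot{\bY}^\ft\bY$ term, the latter encoding the mismatch between the two eigenvalues.

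For \cref{eq:st_geo2}, the initial conditions follow immediately: at $t=0$ both matrix exponentials are identity, so $\bM(0)=\dI_p$, $\bN(0)=0$, and $\bY(0)=\tidY$; differentiating and combining terms at $t=0$ gives $\dot{\bY}(0) = 2\alpha\,\tidY\bA + \bQ\bR + (1-2\alpha)\tidY\bA = \tidY\bA + \bQ\bR = \eta$, which is precisely the orthogonal decomposition of $\eta$ relative to $\tidY$. To see $\bY(t)\in\St{\KK}{p}{n}$, note that $\begin{bmatrix}2\alpha\bA & -\bR^\ft \\ \bR & 0\end{bmatrix}$ is $\ft$-antisymmetric (using $\bA^\ft=-\bA$), so its exponential is $\ft$-orthogonal and $\bM^\ft\bM + \bN^\ft\bN = \dI_p$; combined with $\tidY^\ft\bQ = 0$, $\tidY^\ft\tidY = \bQ^\ft\bQ = \dI$, and the $\ft$-orthogonality of $\exp((1-2\alpha)t\bA)$, this yields $\bY^\ft\bY=\dI_p$.

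The technically heaviest step is substituting \cref{eq:st_geo2} into \cref{eq:ageo}. Using the defining ODE $\tfrac{d}{dt}\begin{bmatrix}\bM\\\bN\end{bmatrix} = \begin{bmatrix}2\alpha\bA & -\bR^\ft \\ \bR & 0\end{bmatrix}\begin{bmatrix}\bM\\\bN\end{bmatrix}$, one computes $\dot{\bY}$ and $\ddot{\bY}$ by the product rule and expands, employing the identities $\bS_0 = -\bA^2 + \bR^\ft\bR$, the antisymmetry of $\bA$, and the fact that the columns of $\bY(t)$ lie in the span of $[\tidY,\bQ]$, so that $\bY\bY^\ft$ and $\dI_n - \bY\bY^\ft$ act explicitly on this subspace. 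For \cref{eq:st_geo1}, one can either repeat the argument with the corresponding $(2p)\times(2p)$ block matrix, or substitute $\eta=\tidY\bA+\bQ\bR$ inside $[\tidY,\eta]$ to reduce \cref{eq:st_geo1} (via an invertible similarity in the full-rank case for $\bR$) to \cref{eq:st_geo2}.

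The main obstacle will be the bookkeeping in the geodesic-equation verification, as $\bA$, $\bR$, and the factor $\exp((1-2\alpha)t\bA)$ do not commute in general. A systematic strategy is to work entirely in the basis $[\tidY,\bQ]$, reducing \cref{eq:ageo} to an ODE on the $(p+\mathrm{rank}\,\bR)\times p$ coefficient matrix, where the cancellation of the mismatched-eigenvalue contribution becomes transparent and exactly matches the block ODE satisfied by $[\bM;\bN]$.
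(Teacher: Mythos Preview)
Your proposal is correct in outline, but it takes a genuinely different route from the paper. You plan to \emph{verify} the two formulas by checking the initial conditions, the Stiefel constraint, and the geodesic equation directly. The paper instead \emph{derives} \cref{eq:st_geo1} and \cref{eq:st_geo2} from \cref{eq:ageo} by exploiting two conservation laws: first it shows that $\bA(t):=\bY(t)^\ft\dot{\bY}(t)$ is constant along the geodesic, then it shows that $\bS(t):=\dot{\bY}(t)^\ft\dot{\bY}(t)$ satisfies $\dot{\bS}=(2\alpha-1)(\bA\bS-\bS\bA)$ and hence $\bS(t)=e^{(2\alpha-1)t\bA}\bS_0\,e^{(1-2\alpha)t\bA}$. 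Substituting these back into \cref{eq:ageo} and right-multiplying by $e^{(2\alpha-1)t\bA}$ converts the nonlinear geodesic equation into a \emph{linear} first-order system for the pair $(\bY e^{(2\alpha-1)t\bA},\dot{\bY}e^{(2\alpha-1)t\bA})$, whose solution is exactly \cref{eq:st_geo1}. Formula \cref{eq:st_geo2} is then obtained by setting $Z(t)=\bY(t)e^{(2\alpha-1)t\bA}$, deriving the second-order linear equation $\ddot{Z}-2\alpha\dot{Z}\bA+Z\bR^\ft\bR=0$, and splitting $Z$ into its $\tidY$- and $\bQ$-components.

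What each approach buys: the paper's route explains \emph{why} the extra factor $\exp((1-2\alpha)t\bA)$ appears (it is precisely the gauge that makes $\bS(t)$ constant and linearizes the equation), and it avoids the heavy bookkeeping you correctly anticipate in the direct substitution of $\ddot{\bY}$ into \cref{eq:ageo}. Your verification route is logically self-contained and does not need the insight that $\bA(t)$ is conserved, but the cancellation in the $(\dI_n-\bY\bY^\ft)\dot{\bY}\dot{\bY}^\ft\bY$ term will be opaque without that structure; reducing to the $[\tidY,\bQ]$ basis as you suggest is exactly the right move and will work, but expect several pages of block algebra where the paper needs only a few lines of ODE manipulation.
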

\begin{proof}
  The metric defined in equations (43) of \cite{ExtCurveStiefel} corresponds to $\alpha_0=2, \alpha_1 = \frac{1}{\alpha+1}$, where $\alpha$ defined in that paper corresponds to $\frac{1}{2\alpha}-1$ in our notations. Since geodesics are unchanged by metric scaling, \cref{eq:ageo} follows from the geodesic equation (64) of \cite{ExtCurveStiefel}. (The proof in \cite{ExtCurveStiefel} extends to the complex case). We provided an alternative derivation in \cite{NguyenRiemann}.

Section 2.2.2 of \cite{Edelman_1999} shows a derivation of \cref{eq:st_geo1} for the case $\alpha=1$ (on ideas of Ross Lippert). We show it extends with little change for all $\alpha$. From $\bQ\bR = (\dI-\tidY\tidY^{\ft})\eta$, we have $\bR^{\ft}\bR = \bR^{\ft}\bQ^{\ft}\bQ\bR = \eta^{\ft}(\dI-\tidY\tidY^{\ft})\eta = \bS_0 -(-\bA)(\bA)$, from here $\bS_0 = -\bA^2+\bR^{\ft}\bR$. Following \cite{Edelman_1999}, put $\bA(t) := \bY(t)^{\ft}\dot{\bY}(t)$ (we should use a different symbol for $\bA(t)$ to be clear, but we will show it is constant shortly), and $\bS(t) := \dot{\bY}^{\ft}\dot{\bY}$, hence $\bS(0) = \bS_0$. Then, $\dot{\bA}(t)=\dot{\bY}(t)^{\ft}\dot{\bY}(t) + \bY(t)^{\ft}\ddot{\bY}(t)$ expands to (dropping $t$)
$$\begin{gathered}
\dot{\bA}(t)=\dot{\bY}^{\ft}\dot{\bY} - \bY^{\ft}\bY\dot{\bY}^{\ft}\dot{\bY} -2(1-\alpha) \bY^{\ft}(\dI_n-\bY\bY^{\ft})(\dot{\bY}\dot{\bY}^{\ft})\bY = 0
\end{gathered}
$$
where we have used the geodesic equation to expand $\ddot{\bY}$, and use $\bY^{\ft}\bY = \dI$ to reduce $\bY^{\ft}(\dI_n-\bY\bY^{\ft})$ to zero in the last equality. Thus, $\bA(t)$ is constant, and by the initial condition $\bA(t) = \bY^{\ft}\eta =\bA$ is $\ft$-antisymmetric, $\dot{\bY}^{\ft}(t)\bY(t) = -\bY^{\ft}(t)\dot{\bY}(t) = - \bA$. Next, expand $\dot{\bS}(t) = \ddot{\bY}(t)^{\ft}\dot{\bY}(t) + \dot{\bY}(t)^{\ft}\ddot{\bY}(t)$. We simplify $\ddot{\bY}(t)^{\ft}\dot{\bY}(t)$ using \cref{eq:ageo}:
$$\begin{gathered}
\dot{\bY}(t)^{\ft}\ddot{\bY}(t)=
-\dot{\bY}(t)^{\ft}\bY(t)\dot{\bY}(t)^{\ft}\dot{\bY}(t) - 2(1-\alpha)\dot{\bY}(t)^{\ft}(\dI_n-\bY\bY^{\ft}(t))(\dot{\bY}(t)\dot{\bY}(t)^{\ft})\bY(t) \\
=AS + 2(1-\alpha)SA + 2(1-\alpha)(-A)A(-A) =AS + 2(1-\alpha)SA + 2(1-\alpha)A^3
\end{gathered}$$
Thus $\dot{S}(t) = \dot{\bY}(t)^{\ft}\ddot{\bY}(t) + (\dot{\bY}(t)^{\ft}\ddot{\bY}(t))^{\ft}=(2\alpha-1)(AS - SA)$, where we have used the fact that $(A^3)^{\ft} + A^3 =0$. Therefore
$ S(t) = e^{(2\alpha-1)tA}S_0e^{(1-2\alpha)tA}$. Equation \ref{eq:ageo} becomes:
$$
\ddot{\bY} + \bY e^{(2\alpha-1)tA}\bS_0e^{(1-2\alpha)tA}+2(1-\alpha)(-\dot{\bY}A+\bY \bA^2) = 0$$
\begin{equation}
    \label{eq:ageo_l}
    \ddot{\bY}e^{(2\alpha-1)tA} + \bY e^{(2\alpha-1)tA}\bS_0+2(1-\alpha)(-\dot{\bY}A+\bY \bA^2)e^{(2\alpha-1)tA} = 0\end{equation}
    The last equation implies:
    $$\begin{gathered}
\frac{d}{dt}(\bY e^{(2\alpha-1)At}, \dot{\bY}e^{(2\alpha-1)tA}) =\\
(\bY e^{(2\alpha-1)tA}, \dot{\bY}e^{(2\alpha-1)tA})
\begin{bmatrix} (2\alpha-1)A & -\bS_0 +(2\alpha-2)A^2 \\
I &  A\end{bmatrix}
\end{gathered}
$$
    From here, \cref{eq:st_geo1} follows. Theorem 2.1 of \cite{Edelman_1999} is  a special case of \cref{eq:st_geo2} for $\alpha=\frac{1}{2}$. To prove it, set $Z(t) = \bY(t)e^{t(2\alpha-1)A}$, then take time derivatives:
$$\dot{\bY}(t)\exp (2\alpha-1)tA = \dot{Z}(t) -  (2\alpha-1)Z(t)A$$
$$\ddot{\bY}(t)\exp (2\alpha-1)tA  = \ddot{Z}(t)-2(2\alpha-1)\dot{Z}(t)A+(2\alpha-1)^2ZA^2$$
With $\bS_0 = \eta^{\ft}\eta = - A^2 +R^{\ft}R$, the left-hand-side of \cref{eq:ageo_l} becomes:
$$\begin{gathered}\ddot{Z}-2(2\alpha-1)\dot{Z}(t)A+Z(2\alpha-1)^2A^2 +Z(-A^2+R^{\ft}R) +\\
  2(1-\alpha)(-(\dot{Z}(t) - Z(t)(2\alpha-1)A)A + ZA^2)=0\end{gathered}$$
We collect the coefficients for $\dot{Z}$ as $-2(2\alpha-1) - 2(1-\alpha) = -4\alpha +2 -2 +2\alpha = -2\alpha$ and for $Z$ as $(2\alpha-1)^2A^2+(-A^2+R^{\ft}R) + 2(1-\alpha)(2\alpha)A^2 = R^{\ft}R$, hence the equation for $Z$ is
$$\ddot{Z} -2\alpha \dot{Z} +R^{\ft}R=0$$
From \cref{eq:st_geo1}, $Z(t) = \tidY M_1(t) + \eta M_2(t)$ for some function $M_1, M_2$, so
$$Z(t) = \tidY(M_1(t) + \tidY^{\ft}\eta M_2) + (I - \tidY\tidY^{\ft})\eta M_2(t)=\tidY M(t) + QR M_2 = \tidY M(t) + QN(t)$$
By linearity, both $M(t) = \tidY^{\ft}Z(t)$ and $N(t) = Q^{\ft}Z(t)$ satisfy the same equation as $Z$. The initial conditions are $M(0) = \dI_p, N(0) = 0, \dot{M}(0) = A, \dot{N}(0) = R$, so \cref{eq:st_geo2} gives us the solution. If $\eta-\tidY\tidY^{\ft}\eta$ is of full rank, the thin $QR$ decomposition satisfies the condition of the theorem as $Q = (\eta-\tidY\tidY^{\ft}\eta)R^{-1}$. In the degenerate case, we use an SVD decomposition to determine the rank, and $\bQ$ has fewer columns than $\bY$ does.\qed
\end{proof}
In our notations, the geodesic formula in equation (75) of \cite{ExtCurveStiefel} could be written as
\begin{equation}
  Y(t) = \exp(t((2\alpha-2)\tidY\tidY^\sfT\eta\tidY^\ft -\tidY\eta^\ft +\eta\tidY^\ft))\exp(t(1-2\alpha)\tidY\tidY^{\ft}\eta \tidY^{\ft})\tidY
\end{equation}
Without the help of the geodesic equation, it is not apparent that this formula is equivalent to \cref{eq:st_geo1} and \cref{eq:st_geo2}. As noted, it requires computing exponentials of two $n\times n$ matrices.
\section{Grassmann manifolds and geodesics}\label{sec:grassmann}
On the unit sphere $S^n\subset \R^{n+1}$, the exponential map $\Exp_{\tilde{x}}\eta$ at $\tilde{x}\in S^n$ is given by the simple formula $x_1 = \cos\|\eta\|\tilde{x} + \frac{\sin \|\eta\|}{\|\eta\|}\eta$ (which reflects the fact that the angle between $x_1$ and $\tilde{x}$ is $\|\eta\|$). This formula could be written as $x_1 = \csr(\eta^{\sfT}\eta)\tilde{x} + \ssr(\eta^{\sfT}\eta)\eta$, with $\csr z = \cos z^{1/2}$ and $\ssr z= z^{-1/2} \sin z^{1/2}$, understood as entire functions as mentioned in \cref{sec:notation}. This formula implies $\cos \|\eta\| = \tilde{x}^{\ft}x_1$, from here the logarithm map for the sphere is given by $\Log_{\tilde{x}}x_1= (x_1 - \tilde{x}(\tilde{x}^{\ft}x_1))(1-(\tilde{x}^{\ft}x_1)^2)^{-1/2}\arccos(\tilde{x}^{\ft}x_1)$.

As a corollary of \cref{prop:st_geo}, we provide similar formulas for horizontal geodesics and logarithm on the Grassmann manifold $\Gr{\KK}{p}{n}$, considered as a quotient of $\St{\KK}{p}{n}$. For other approaches to Grassmann geodesics, see \cite{BATZIES_2015,BAZ}.

Recall from \cite{Edelman_1999}, the Grassmann manifold $\Gr{\KK}{p}{n}$ could be considered as a quotient of the Stiefel manifold $\St{\KK}{p}{n}$ under the action of the $\ft$-orthogonal group $\UU{\ft}{p}$ by right multiplication $(Y, U)\mapsto YU$ for $Y\in\St{\KK}{p}{n}, U\in \UU{\ft}{p}$. We represent the equivalent class of $Y$ by $\lb Y \rb$, and the image of a tangent vector $\eta$ under this quotient as $\lb \eta\rb$. A tangent vector $\eta$ on the Stiefel manifold is horizontal with respect to this action if $\bY^{\ft}\eta = 0$. Since $\csr z$ and $\ssr z$ are entire functions, $\csr X$ and $\ssr X$ are defined for any square matrix $X$. Also, if we define $(1 - x^2)^{-1/2}\arccos x$ to be $1$ at $x=1$, we can add $x=1$ to its domain to get a function on the interval $[0, 1]$, which is right-continuous at $1$. With this convention, we can define $(\dI_p - \Sigma^2)^{-1/2}\arccos \Sigma $ for a diagonal matrix $\Sigma$ with diagonal entries in $[0, 1]$.
\begin{proposition}\label{prop:grass} Let $\csr z$ and $\ssr z$ be the analytic continuations of $\cos z^{1/2}$  and $z^{-1/2} \sin z^{1/2}$. At a point $\tidY$ on the Stiefel manifold $\St{\KK}{p}{n}$, for a tangent vector $\eta$ such that $\tidY^{\ft}\eta = 0$, the Stiefel exponential map is given by
  \begin{equation}\label{eq:exp_grass}
\Exp^{\SSt}_{\tidY}\eta = \tidY \csr \eta^{\ft}\eta + \eta\ssr \eta^{\ft}\eta = \tidY \cos (\eta^{\ft}\eta)^{1/2} + \eta\{(\eta^{\ft}\eta)^{-1/2}\sin (\eta^{\ft}\eta)^{1/2}\}
  \end{equation}
The class $\lb \Exp^{\SSt}_{\tidY}\eta\rb$ represents the Grassmann exponential $\Exp^{\GGr}_{\lb \tidY\rb}\lb\eta\rb$. Conversely, for $\tidY$ and $\bZ$ in $\St{\KK}{p}{n}$, let $\tidY^{\ft} \bZ =  U\Sigma V$ be an SVD decomposition of $\tidY^{\ft} \bZ\in\KK^{p\times p}$. Set
  \begin{equation}\label{eq:exp_log}  \eta = (\bZ - \tidY \tidY^{\ft}\bZ)V^{\ft} (\dI_p - \Sigma^2)^{-1/2}\arccos \Sigma  U^{\ft}
\end{equation}    
  then $\eta$ is a horizontal vector on the Stiefel manifold and $\Exp^{\SSt}_{\tidY}\eta = \bZ V^{\ft}U^{\ft}$. Thus, $\Exp^{\SSt}_{\tidY}\eta$ and $\bZ$ represent the same element in the Grassmann manifold $\Gr{\KK}{p}{n}$. The length of $\eta$ is $(\alpha_0\TrR\arccos^2\Sigma)^{1/2}$, which is the geodesic distance between the equivalent classes $\lb \tidY\rb$ and $\lb Z\rb$ of $\tidY$ and $Z$, hence $\eta$ is the lift of a geodesic logarithm $\Log^{\GGr}_{\lb \tidY\rb}\lb \bZ \rb$ of $\Gr{\KK}{p}{n}$ to $\tidY$.
\end{proposition}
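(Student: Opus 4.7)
The proof divides naturally into three parts: establishing the exponential formula, verifying the converse construction of $\eta$ from an SVD, and computing the length.

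For the exponential formula \cref{eq:exp_grass}, the plan is to specialize \cref{eq:st_geo2} of \cref{prop:st_geo} to the horizontal case $\bA = \tidY^{\ft}\eta = 0$. With $\bA = 0$ the trailing factor $\exp((1-2\alpha)t\bA)$ becomes the identity, and the decomposition $\bQ\bR = \eta - \tidY\tidY^{\ft}\eta$ reduces to a thin QR of $\eta$ itself. The block matrix in the exponential becomes $B = \begin{bmatrix}0 & -\bR^{\ft}\\ \bR & 0\end{bmatrix}$, whose square is the block-diagonal $\mathrm{diag}(-\bR^{\ft}\bR, -\bR\bR^{\ft})$. Expanding $\exp(tB)\begin{bmatrix}\dI_p\\0\end{bmatrix}$ by powers and collecting even versus odd terms yields $\bM(t) = \csr(t^2 \bR^{\ft}\bR)$ and $\bN(t) = t\bR\,\ssr(t^2\bR^{\ft}\bR)$. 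Substituting $t=1$, using $\bR^{\ft}\bR = \eta^{\ft}\eta$ and $\bQ\bR = \eta$, gives the claimed formula. The fact that horizontal geodesics on the Stiefel descend to geodesics on the Grassmann is standard for the quotient of a Riemannian submersion, so $\lb\Exp^{\SSt}_{\tidY}\eta\rb = \Exp^{\GGr}_{\lb\tidY\rb}\lb\eta\rb$ follows.

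For the converse, I first verify horizontality directly: $\tidY^{\ft}(\bZ - \tidY\tidY^{\ft}\bZ) = 0$, so $\tidY^{\ft}\eta = 0$. Next I compute $\eta^{\ft}\eta$ by expanding $(\bZ - \tidY\tidY^{\ft}\bZ)^{\ft}(\bZ - \tidY\tidY^{\ft}\bZ) = \dI_p - \bZ^{\ft}\tidY\tidY^{\ft}\bZ = \dI_p - V^{\ft}\Sigma^2 V$, conjugating by $V$ gives $\dI_p - \Sigma^2$, which then collapses with the two factors of $(\dI_p-\Sigma^2)^{-1/2}\arccos\Sigma$ to leave $\eta^{\ft}\eta = U(\arccos^2\Sigma)U^{\ft}$. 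Consequently $\csr(\eta^{\ft}\eta) = U\Sigma U^{\ft}$ and $\ssr(\eta^{\ft}\eta) = U(\arccos\Sigma)^{-1}(\dI_p-\Sigma^2)^{1/2}U^{\ft}$. Plugging these into \cref{eq:exp_grass}, the second term simplifies to $(\bZ - \tidY\tidY^{\ft}\bZ)V^{\ft}U^{\ft}$, while the first is $\tidY U\Sigma U^{\ft} = \tidY\tidY^{\ft}\bZ V^{\ft}U^{\ft}$, and the two telescope to give $\bZ V^{\ft}U^{\ft}$.

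For the length computation, since $\eta$ is horizontal the metric \cref{eq:stf_metric} reduces to $\alpha_0\TrR(\eta^{\ft}\eta)$, and the trace of $U(\arccos^2\Sigma)U^{\ft}$ equals $\TrR(\arccos^2\Sigma)$ by cyclicity. The point that requires a short justification, and is the subtlest step, is that this length coincides with the geodesic distance $d(\lb\tidY\rb, \lb\bZ\rb)$, i.e.\ that no shorter horizontal geodesic connects the two fibers. Here I would invoke the standard identification of the singular values $\sigma_i$ of $\tidY^{\ft}\bZ$ with the cosines of the principal angles between the subspaces $\lb\tidY\rb$ and $\lb\bZ\rb$, so that $\arccos\sigma_i$ are precisely the principal angles and $(\sum_i\arccos^2\sigma_i)^{1/2}$ is the well-known Grassmann distance; alternatively, minimizing the lifted length over the $\UU{\ft}{p}$-action reduces to minimizing $\|\arccos(U\Sigma V \cdot W)\|$ over unitary $W$, which attains its minimum at $W = V^{\ft}U^{\ft}$. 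The convention $(1-x^2)^{-1/2}\arccos x \to 1$ at $x=1$ ensures $\eta$ is well-defined even when some $\sigma_i = 1$ (coincident directions).
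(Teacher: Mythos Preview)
Your proof is correct and follows the paper's approach closely for the verification of the exponential formula and the computation showing $\Exp^{\SSt}_{\tidY}\eta = \bZ V^{\ft}U^{\ft}$. One cosmetic difference: you specialize \cref{eq:st_geo2} (the $(\bQ,\bR)$ form) and exploit the block structure of $\begin{bmatrix}0 & -\bR^{\ft}\\ \bR & 0\end{bmatrix}$, whereas the paper specializes \cref{eq:st_geo1} and expands $\exp t\begin{bmatrix}0 & -S\\ \dI & 0\end{bmatrix}$ with $S=\eta^{\ft}\eta$ directly. Both routes give the same series immediately.

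The one place where you genuinely diverge is the minimality argument. You invoke the identification of the $\sigma_i$ with cosines of principal angles and cite the Grassmann distance formula as ``well-known''; your alternative sketch, minimizing $\|\arccos(U\Sigma V W)\|$ over unitary $W$, is imprecise as written (the matrix $\arccos$ is not defined on $U\Sigma V W$ without further care). The paper instead gives a self-contained lower-bound argument: if $\eta_1$ is any horizontal vector with $\Exp^{\SSt}_{\tidY}\eta_1 = \bZ W$, then left-multiplying \cref{eq:exp_grass} by $\tidY^{\ft}$ yields $\cos(\eta_1^{\ft}\eta_1)^{1/2} = \tidY^{\ft}\bZ W$, so $\tidY^{\ft}\bZ = \cos(\eta_1^{\ft}\eta_1)^{1/2} W^{\ft}$ is a polar decomposition, forcing the eigenvalues of $(\eta_1^{\ft}\eta_1)^{1/2}$ to lie in $\{\arccos\sigma_i,\,2\pi-\arccos\sigma_i\}+2\pi\Z$ with $\arccos\sigma_i$ the smallest choice. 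This shows $(\alpha_0\TrR\arccos^2\Sigma)^{1/2}$ is a lower bound for the length of any connecting horizontal geodesic, which your explicit $\eta$ then attains. The paper's argument is preferable here because it does not rely on the very distance formula the proposition is establishing, and it also yields the uniqueness statement when $\tidY^{\ft}\bZ$ has full rank.
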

\begin{proof} For a horizontal vector, $A=\tidY^{\ft}\eta=0$, and expand $\exp t\begin{bmatrix} 0 & -S\\ I & 0\end{bmatrix}$ in \cref{eq:st_geo1}
$$
  \begin{gathered}
    Y(t) =
      \begin{bmatrix} \tilde{Y} & \eta\end{bmatrix}\{\sum_{n=0}^{\infty} \frac{(-1)^n}{(2n)!}t^{2n}\begin{bmatrix} S^n & 0\\ 0 & S^n \end{bmatrix}
        + \sum_{n=0}^{\infty}\frac{(-1)^n}{(2n+1)!}t^{2n+1}\begin{bmatrix} 0 & -S^{n+1}\\ S^n & 0\end{bmatrix}  \}\begin{bmatrix}\ \dI_p \\ 0\end{bmatrix} =\\
      \begin{bmatrix} \tilde{Y} & \eta\end{bmatrix}\{\sum_{n=0}^{\infty} \frac{(-1)^n}{(2n)!}t^{2n}\begin{bmatrix} S^n \\ 0  \end{bmatrix}
        + \sum_{n=0}^{\infty}\frac{(-1)^n}{(2n+1)!}t^{2n+1}\begin{bmatrix} 0 \\ S^n\end{bmatrix}  \}
          =\\    \tilde{Y}\sum_{n=0}^{\infty} \frac{(-1)^n}{(2n)!}t^{2n} S^n 
          + \eta\sum_{n=0}^{\infty}\frac{(-1)^n}{(2n+1)!}t^{2n+1}  S^n
  \end{gathered}$$
  where $S = \eta^{\ft}\eta$, which gives us \cref{eq:exp_grass}. Now, assuming $\eta_1$ is horizontal such that $\Exp^{\SSt}_{\tidY}\eta_1  = \bZ W$ with $W^{\ft} W = \dI_p$, ($\Exp^{\SSt}_{\tidY}\eta_1$ connects $\tidY$ with an element in the orbit of $Z$). Multiply both sides of \cref{eq:exp_grass} by $\tidY^{\ft}$ and use the horizontal condition, we have  $\cos(\eta_1^{\ft}\eta_1)^{1/2} = \tidY^{\ft} \bZ W$, thus $\tidY^{\ft} \bZ = \cos(\eta_1^{\ft}\eta_1)^{1/2} W^{\ft}$ is a polar decomposition. This implies $\cos^2\{(\eta_1^{\ft}\eta_1)^{1/2}\} = \tidY^{\ft} \bZ \bZ^{\ft} \tidY = U\Sigma^2 U^\ft$ if $\tidY^{\ft} \bZ = U\Sigma V$ is an SVD decomposition. From here, eigenvalues of $(\eta_1^{\ft}\eta_1)^{1/2}$ must be diagonal entries of $\arccos\Sigma$ or $2\pi-\arccos\Sigma$, plus multiples of $2\pi$, with $\arccos\Sigma$ is the smallest possible choice. Hence, $(\alpha_0\TrR\arccos^2\Sigma)^{1/2}$ is a lower-bound of geodesic lengths which we will show is attainable. The analysis also shows if $\tidY^{\ft}Z$ is of full rank, the polar decomposition is unique, thus both the alignment matrix $W$ and $\csr\eta^{\ft}\eta$ are unique. In that case, if the minimal length $(\alpha_0\TrR\arccos^2\Sigma)^{1/2}$ is attainable, all entries of $\arccos\Sigma$ are in $(0, \pi/2)$, so $\ssr\eta^{\ft}\eta$ is invertible and we can solve for at most one $\eta$ from \cref{eq:exp_grass}, thus, length-minimizing geodesics between $\tidY$ and $Z$, if exist, must be unique. If an eigenvalue of $\arccos\Sigma$ is $\pi/2$ we may have more than one length-minimizing geodesics.
  
  Let us prove \cref{eq:exp_log} gives us a length minimizing geodesic. Direct substitution shows $\tidY^{\ft}\eta = 0$. Since $\Sigma$ is diagonal, elementwise functions of $\Sigma$ commute, together with $(\bZ-\tidY\tidY^{\ft}\bZ)^{\ft}(\bZ-\tidY\tidY^{\ft}\bZ) = \dI_p - \bZ^{\ft}\tidY \tidY^{\ft}\bZ$, \cref{eq:exp_log} implies
  $$\begin{gathered}
    \eta^{\ft}\eta  = U (\dI_p - \Sigma^2)^{-1/2}\arccos \Sigma  V(\dI_p - \bZ^{\ft}\tidY \tidY^{\ft}\bZ)V^{\ft} (\dI_p - \Sigma^2)^{-1/2}\arccos \Sigma  U^{\ft}\\
    = U(\dI_p-\Sigma^2)^{-1}\arccos^2 \Sigma U^{\ft} - U (\dI_p - \Sigma^2)^{-1/2}\arccos \Sigma  V\bZ^{\ft}\tidY \tidY^{\ft}\bZ V^{\ft} (\dI_p - \Sigma^2)^{-1/2}\arccos \Sigma  U^{\ft}
  \end{gathered}$$
  Using $\tidY^{\ft} \bZ = U\Sigma V$, hence $\bZ^{\ft}\tidY \tidY^{\ft}\bZ = V^{\ft}\Sigma^2 V$, the second term is
  $$\begin{gathered} 
    U(\dI_p - \Sigma^2)^{-1/2}(\arccos \Sigma)  VV^{\ft}\Sigma^2 VV^{\ft}  (\dI_p - \Sigma^2)^{-1/2}(\arccos \Sigma) U^{\ft} = U(\dI_p - \Sigma^2)^{-1}\Sigma^2 \arccos^2 \Sigma U^{\ft}
\end{gathered}
    $$
  Thus $\eta^{\ft}\eta = 
U(\dI_p - \Sigma^2)^{-1}(\dI_p - \Sigma^2)\arccos^2\Sigma U^{\ft} = U\arccos^2\Sigma U^{\ft}$, so $\arccos\Sigma = U^{\ft}(\eta^{\ft}\eta)^{1/2} U$ and $(\dI_p-\Sigma^2)^{1/2} = U^{\ft}\sin (\eta^{\ft}\eta)^{1/2} U$. Substitute these expressions to \cref{eq:exp_grass} to evaluate:
  $$\begin{gathered}Y(1) =\tidY U\Sigma U^{\ft} + (\bZ - \tidY \tidY^{\ft}\bZ)V^{\ft} (\dI_p - \Sigma^2)^{-1/2}\arccos \Sigma  U^{\ft} (\eta^{\ft}\eta)^{-1/2}\sin (\eta^{\ft}\eta)^{1/2} =\\
    \tidY U\Sigma U^{\ft} + (\bZ - \tidY \tidY^{\ft}\bZ)V^{\ft} U^{\ft} (\sin (\eta^{\ft}\eta)^{1/2})^{-1}U \arccos \Sigma U^{\ft}(\eta^{\ft}\eta)^{-1/2}\sin (\eta^{\ft}\eta)^{1/2}=\\ \tidY U \Sigma VV^{\ft}U^{\ft} + (\bZ - \tidY \tidY^{\ft}\bZ)V^{\ft} U^{\ft}
  \end{gathered}
  $$
The last expression is reduced to $\tidY \tidY^{\ft}Z  V^{\ft}U^{\ft} + (\bZ - \tidY \tidY^{\ft}\bZ)V^{\ft} U^{\ft} = \bZ V^{\ft} U^{\ft}$. Thus $Y(1)=\bZ V^{\ft} U^{\ft}$, representing the same Grassmann element as $\bZ$. The length of $\eta$ is $(\alpha_0\TrR\eta^{\ft}\eta)^{1/2}  = (\alpha_0\TrR\arccos^2\Sigma)^{1/2}$.
\qed
\end{proof}
We note for the case $p=1$, $\St{\R}{p}{n}$ is the real sphere, the Grassmann manifold is the projective space, $\Sigma = |\tidY^{\ft}\bZ|$. The geodesic above connects $\tidY$ to $\mathrm{sgn}(\tidY^{\ft}\bZ)\bZ$. In this case, \cref{eq:exp_log} is $(\bZ - \tidY \tidY^{\ft}\bZ)\mathrm{sgn}(\tidY^{\ft}Z) (1 - \Sigma^2)^{-1/2}\arccos|\tidY^{\ft}\bZ|$, versus the logarithm map of the sphere $(\bZ - \tidY \tidY^{\ft}\bZ) (1 - \Sigma^2)^{-1/2}\arccos(\tidY^{\ft}\bZ)$. Numerically, to evaluate \cref{eq:exp_grass} we can evaluate the entire functions on the eigenvalues of the symmetric matrix $\eta^{\ft}\eta$. To evaluate \cref{eq:exp_log} we only need one SVD decomposition of $\tidY^{\ft}Z$. We benefited from reading \cite{BAZ}.

\section{Fr{\'e}chet derivatives}\label{sec:frechet}
Recall \cite{Higham,Mathias,Havel} if $f(A)=\sum_{i=0}^{\infty}f_iA^i$ is a power series with scalar coefficient and $A$ is a square matrix, then the Fr{\'e}chet derivative $\frL_f(A, E) = \lim_{h\to 0}\frac{1}{h}(f(A+hE) - f(A))$ in direction $E$ could be expressed as
$$\frL_f(A, E) = \sum_{i=0}^{\infty} f_i \sum_{a+b=i-1}A^aEA^b$$
under standard convergence conditions, thus for entire functions it always exists. It is known that $\frL_f(A, E)$ and $f(A)$ could be computed together with a computational complexity of around three times the complexity of $f(A)$. There exist routines to compute Fr{\'e}chet derivatives of the exponential function in open source or commercial package, for example, the function {\it expm\_frechet} in SciPy \cite{scipy-nmeth}. To explain why we prefer Fr{\'e}chet derivatives, consider the simple case of the directional derivative of the exponential function. For a matrix function $X(t)$, the well-known formula $\frac{d}{dt} \exp X(t) = \exp X(t) \frac{1-\exp(-\ad_{X(t)})}{\ad_{X(t)}} \frac{dX(t)}{dt}$ (Section 3.2 of \cite{Gallier}) implies, for $X(t) =  A+tE$, $\frL_{\exp}(A, E) = \exp A\sum_{n=0}^{\infty} \frac{(-1)^n}{(n+1)\!!}\ad_A^n E$ (where $\ad_A E = AE - EA$). It is difficult to evaluate this formula directly. On the other hand, the library function {\it expm\_frechet} is effective computationally. The computational cost will be $O(p^3)$, where $p$ is the size of $A$ (it is also dependent on the logarithm of the eigenvalues of $A$). In the cases of symmetric and antisymmetric matrices, an eigenvalue decomposition or a Schur decomposition may improve the sparsity of the operations.

The following lemma (which is likely known, but we could not find a reference) shows Fr{\'e}chet derivatives are useful to compute matrix gradient of power series, 
\begin{lemma}\label{lem:frechet_grad} Let $f$ be an analytic function near $A$ and
 $\Tr (C \frL_f(A, E) D)$ is well-formed for some matrices $C, E, D$. Then
\begin{equation}\Tr\{ C \frL_f(A, E) D\} = \Tr\{ \frL_f(A, DC) E\}\end{equation}
\end{lemma}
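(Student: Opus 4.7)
The plan is to prove the identity term by term on the power series expansion of $\frL_f$, then invoke linearity and convergence. Concretely, I would first recall from the preceding display that $\frL_f(A,E) = \sum_{i=0}^\infty f_i \sum_{a+b=i-1} A^a E A^b$, and write
$$\Tr\{C \frL_f(A,E) D\} = \sum_{i=0}^\infty f_i \sum_{a+b=i-1} \Tr\{C A^a E A^b D\}.$$

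Next, I would apply the cyclic property of the trace to each summand, moving the block $A^b D C A^a$ in front of $E$, to obtain $\Tr\{A^b (DC) A^a E\}$. The key combinatorial observation is then that the index set $\{(a,b) : a,b\geq 0,\ a+b = i-1\}$ is symmetric under the swap $(a,b)\leftrightarrow(b,a)$, so relabeling gives $\sum_{a+b=i-1}\Tr\{A^a(DC)A^b E\}$. Reassembling the series yields $\Tr\{\frL_f(A,DC)\,E\}$, which is the claim.

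The only mild subtlety is justifying the interchange of the infinite sum with the trace and the relabeling; this is handled under the same standard convergence conditions that make the power series definition of $\frL_f$ valid near $A$ (e.g. the spectral radius of $A$ lying inside the radius of convergence of $f$), since absolute convergence of $\sum |f_i|$ in operator norm on a neighborhood of $A$ lets us exchange $\Tr$ with the sum. For entire $f$ such as $\exp$, $\csr$, and $\ssr$ used elsewhere in the paper, this is automatic, and no further care is required. I do not anticipate a real obstacle; the argument is essentially a cyclic-trace manipulation made transparent by the explicit power series formula for $\frL_f$.
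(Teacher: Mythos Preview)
Your proposal is correct and follows essentially the same approach as the paper: reduce to monomials via the power series, then apply the cyclic property of the trace together with the $(a,b)\leftrightarrow(b,a)$ relabeling of the index set. The paper's proof is simply a one-line version of yours, writing $\sum_{a+b=n-1}\Tr(CA^aEA^bD) = \sum_{b+a=n-1}\Tr(A^bDCA^aE)$ for $f(A)=A^n$.
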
  
\begin{proof}From the analytic assumption we only need to prove this for $f(A) = A^n$. This follows from $\sum_{a+b=n-1}\Tr(CA^aEA^bD) = \sum_{b+a=n-1}\Tr(A^bDCA^aE)$.\qed
\end{proof}

\section{Computing the logarithm map for the Stiefel manifold}\label{sec:log_map_stief_all}
\subsection{Main algorithm}
\label{sec:log_map_stief}
We now turn to the problem of computing the logarithm map on the Stiefel manifold. From this section on we will work with the base field $\KK = \R$. We expect the complex case could be tackled similarly. We benefited from reading \cite{Zimmer,Rentmee,Brynes} while preparing this section.

For two elements $\tidY, Z\in\St{\R}{p}{n}$, our approach is to solve \cref{eq:st_geo2} for the tangent vector $\eta$, expressed as $\eta = \tidY A + QR$ in terms of the matrices $A=\tidY^{\sfT}\eta$ and $R=Q^{\sfT}\eta$. Let $\hat{A} = \begin{bmatrix}2\alpha A & -R^{\sfT}\\ R & 0\end{bmatrix}$. Equation (\ref{eq:st_geo2}) could be written in the following form
    \begin{equation}\label{eq:st_geo3}
      Y(t) = [\tidY Q](\exp t\hat{A} )\dI_{p+k, d}\exp ((1-2\alpha)tA)
    \end{equation}
We see  $Y(1)$ is in the column span of $\tidY$ and $Q$. Here, $k\leq p$ is the number of columns of $Q$. On the other hand, for any $(Q, A, R)$ with  compatible dimension such that $\tidY^{\ft}Q =0$ with $A^{\ft} + A = 0$ and $Q^{\ft}Q = \dI_{k}$ for an integer $k$, then $\tidY A + QR$ is a tangent vector and \cref{eq:st_geo3} describes a geodesic with $\dot{Y}(0) = \tidY A + QR$. We note the length of $\eta$ is $(\alpha_0\TrR R^{\ft}R-\alpha_1\TrR A^2)^{1/2}$.

We now show that, given $\tidY$ and $Z$, if $Q$ is such that $W = (\tidY|Q)$ is an orthogonal basis of the column span of $\tidY$ and $Z$, then there is a geodesic connecting $\tidY$ and $Z$ such that $\dot{Y}(0) =\eta$ is in the span of $W$. If $Y$ is in the intersection of the span of $W$ and $\St{\R}{p}{n}$, then  $Y= WV$ for $V\in \R^{(k+p)\times p}$. Next, $Y^\sfT Y = \dI_p$ implies $V^\sfT V = \dI_p$, thus, this intersection is itself a Stiefel manifold. The tangent vectors of this small Stiefel manifold are of the form $\eta = W\phi$, with $V^\sfT \phi$ is antisymmetric, and the induced metric is $\alpha_0\Tr\eta^{\sfT}\eta + (\alpha_1 - \alpha_0)\Tr\eta^{\sfT}YY^\sfT\eta = \alpha_0\Tr\phi^\sfT\phi + (\alpha_1 - \alpha_0)\Tr\phi^\sfT VV^\sfT\phi$. Since the intersection Stiefel manifold is complete, there is a geodesic in it, in the induced metric, connecting $\tidY$ and $Z$. From the expression of the induced metric, the geodesic in the small manifold is a geodesic in the full Stiefel manifold.

As the exponential map is injective near $0$, for $Z$ close enough to $\tidY$, the above analysis shows we can assume $Q$ is a complement basis of $\tidY$ in the column span of $\tidY$ and $Z$. Therefore, we will fix an orthogonal basis $Q$ of the column span of $Z - \tidY \tidY^{\sfT}Z$. We solve for $A$ and $R$ by minimizing the function $\ttF(A, R) = 1/2 \|Y_{(A, R)} - \bZ\|^2_F - p$ with $Y_{(A, R)} = Y(1)$ given by \cref{eq:st_geo2}, and $\|\|_F$ is the Frobenius norm. Since the manifold is complete, there is always a geodesic connecting $\tidY$ and $Z$, thus a minimum always exists with (global) minimal value $-p$. Our algorithm will be local, so it may return a value greater than $-p$, and even if the minimum value is $-p$, the geodesic found may not be of minimal length. However, if $\bZ$ is close enough to $\tilde{Y}$ (within its injectivity radius), the Riemannian logarithm is unique, and the $\eta$ from this approach gives us the logarithm map and geodesic distance. As we will see  in \cref{subsec:num_stiefel}, this objective function works quite adequately for our purpose. The main idea is the gradient of $\ttF$ could be computed by Fr{\'e}chet derivatives.

\begin{theorem}Let $\tidY$ and $\bZ$ be two points on the Stiefel manifold $\St{\R}{p}{n}$ and $Q\in \R^{n\times k}$ is an orthogonal basis of the column span of $\bZ-\tidY\tidY^{\sfT}\bZ$ ($k\leq p$ is its rank). For $(A, R)\in\AHerm{\sfT, p}\times \R^{p\times k}$, let $Y(t)$ be the geodesic starting at $\tidY$ with $\dot{Y}(0)=\eta := \tidY A + QR$ as in \cref{prop:st_geo}. Define the function $\ttF = \ttF(A, R) := (1/2)\Tr((Y(1) - \bZ)^{\sfT}(Y(1) - \bZ)-p$. With $\alpha:=\frac{\alpha_1}{\alpha_0}$, $\hat{A} := \begin{bmatrix}2\alpha A & -R^{\sfT} \\ R & 0\end{bmatrix}\in \R^{(p+k)\times (p+k)}$, $\dI_{p+k, p}:=\begin{bmatrix}\dI_p\\ 0_{k\times p}\end{bmatrix}$, we have
    \begin{equation}\label{eq:cost_stiefel}\ttF(A, R) = - \Tr \bZ^{\sfT}Y(1) = - \Tr \bZ^{\sfT}[\tidY  Q]\exp \hat{A} \dI_{p+k, p}\exp(1-2\alpha)A\end{equation}
Set $\mathring{L} := \frL_{\exp}((1-2\alpha)A, \bZ^{\sfT}[\tidY  Q](\exp\hat{A})\dI_{p+k, p})$, $\hat{L} = \begin{bmatrix}\hat{L}_{11} & \hat{L}_{12} \\ \hat{L}_{21} & \hat{L}_{22}\end{bmatrix} := \frL_{\exp}(\hat{A}, \dI_{p+k, p}\exp(1-2\alpha) A[\tidY Q])$
where the block structure of $\hat{L}$ is defined mirroring that of $\hat{A}$, then the gradient $\nabla \ttF(A, R)= (\nabla_A\ttF(A, R), \nabla_R\ttF(A, R))$ of $\ttF$ is given by
\begin{equation}\label{eq:grad_stief_log}
  \begin{gathered}
\nabla_A\ttF = (1-2\alpha)\asym{\sfT}\mathring{L} + 2\alpha \asym{\sfT}(\hat{L}_{11})\\
\nabla_R\ttF =  \hat{L}_{21} - \hat{L}_{12}^{\sfT}
\end{gathered}
\end{equation}      
\end{theorem}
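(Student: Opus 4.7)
The plan is to differentiate the closed form \cref{eq:cost_stiefel} directly and then apply \cref{lem:frechet_grad} to move the variations $E_A, E_R$ to the last factor of each trace, where they can be read off as gradients. First I would justify the rewriting $\ttF(A, R) = -\Tr \bZ^{\sfT} Y(1)$: since $Y(1)^{\sfT}Y(1) = \dI_p$ and $\bZ^{\sfT}\bZ = \dI_p$, the Frobenius expansion of $\|Y(1)-\bZ\|_F^2$ collapses to $2p - 2\Tr \bZ^{\sfT}Y(1)$, which after subtracting $p$ gives \cref{eq:cost_stiefel} upon substituting \cref{eq:st_geo3}.

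Next I would compute the directional derivative in $R$. A perturbation $R \mapsto R + tE_R$ only touches the off-diagonal blocks of $\hat{A}$, so the variation of $\hat{A}$ is the matrix with $E_R$ in the $(2,1)$-block and $-E_R^{\sfT}$ in the $(1,2)$-block. Using $\frL_{\exp}$ for the derivative of $\exp \hat{A}$ and then \cref{lem:frechet_grad} with $C = \bZ^{\sfT}[\tidY Q]$ and $D = \dI_{p+k,p}\exp((1-2\alpha)A)$, the whole expression becomes a trace against $\hat{L}$ of the block perturbation. Expanding the block product and taking traces of the diagonal blocks gives $d\ttF[E_R] = \Tr(\hat{L}_{21} E_R) - \Tr(\hat{L}_{12} E_R^{\sfT})$, which after using $\Tr(XY^{\sfT}) = \Tr(X^{\sfT}Y)$ reads $\langle \hat{L}_{21}-\hat{L}_{12}^{\sfT}, E_R\rangle_F$; since $E_R$ is unconstrained, this identifies $\nabla_R \ttF$ as claimed.

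For the $A$-gradient I would split the variation into two contributions. The perturbation of $\exp((1-2\alpha)A)$ produces, via \cref{lem:frechet_grad}, a trace of $\mathring{L}$ against $(1-2\alpha)E_A$. The perturbation of $\hat{A}$ (only its upper-left block, which becomes $2\alpha E_A$) produces, again via \cref{lem:frechet_grad}, a trace of $\hat{L}$ against $2\alpha \begin{bmatrix} E_A & 0\\ 0 & 0\end{bmatrix}$, reducing to $2\alpha\Tr(\hat{L}_{11}E_A)$. Combining gives $d\ttF[E_A] = -(1-2\alpha)\Tr(\mathring{L} E_A) - 2\alpha\Tr(\hat{L}_{11} E_A)$. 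Because $E_A$ is constrained to $\AHerm{\sfT, p}$, only the $\sfT$-antisymmetric part of the coefficient contributes, and the sign flip from $\Tr(BE_A) = -\Tr(B^{\sfT}E_A)$ for antisymmetric $E_A$ yields $\nabla_A \ttF = (1-2\alpha)\asym{\sfT}\mathring{L} + 2\alpha\,\asym{\sfT}\hat{L}_{11}$.

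The main bookkeeping obstacle is getting the signs and block placements correct: one must track the placement of $\bZ^{\sfT}$ in the seed directions for the two Fréchet derivatives, apply \cref{lem:frechet_grad} with the correct $C$ and $D$ (including the block $\dI_{p+k,p}$), and then correctly project the resulting linear functional onto the antisymmetric subspace when identifying $\nabla_A \ttF$. Once this projection is handled via $\asym{\sfT}$ and the block-matrix trace is expanded carefully, the formulas in \cref{eq:grad_stief_log} fall out. The rest is routine computation of matrix exponentials and their Fréchet derivatives, which by the discussion of \cref{sec:frechet} we may treat as a closed-form primitive.
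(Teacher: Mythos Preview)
Your proposal is correct and follows essentially the same route as the paper's proof: expand $\ttF$ to $-\Tr \bZ^{\sfT}Y(1)$ via orthogonality, differentiate the two exponential factors with the product rule, invoke \cref{lem:frechet_grad} to cycle the increment to the end of each trace, and then project onto $\AHerm{\sfT,p}$ for the $A$-component. The only slip is a swapped transpose in your intermediate expression for $d\ttF[E_R]$ (it should read $\Tr(\hat{L}_{21}E_R^{\sfT})-\Tr(\hat{L}_{12}E_R)$, since $\hat{L}_{21}$ and $E_R$ are both $k\times p$), but this is exactly the bookkeeping you flag and it does not affect the argument or the final gradient.
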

\begin{proof} Expanding the formula for $\ttF(A, R)$, using the fact that $Y(1)^{\sfT}Y(1) = \dI_p, \bZ^{\sfT}\bZ = \dI_p$ we have $\ttF(A, R) = - \Tr \bZ^{\sfT}Y(1)$, then we can apply \cref{eq:st_geo3} to expand $\ttF(A, R)$. The directional derivative of $\ttF$ in the direction $(\Delta A, \Delta R)$ at $(A, R$), with $\Delta A\in \AHerm{\sfT}{p}$ and $\Delta R\in \R^{p\times k}$ is  
$$\begin{gathered}-\Tr \begin{bmatrix}\bZ^{\sfT}\tidY & \bZ^{\sfT}Q\end{bmatrix}\frL_{\exp}(\hat{A}, \begin{bmatrix}2\alpha \Delta A & -\Delta R^{\sfT} \\ \Delta R & 0 \end{bmatrix})\begin{bmatrix}\exp(1-2\alpha)A\\ 0_{k\times p}\end{bmatrix} -\\ (1-2\alpha)\Tr \bZ^{\sfT}[\tidY  Q](\exp\hat{A})\dI_{p+k, p}\frL_{\exp}(A, \Delta A)
    =\\ -
\Tr \frL_{\exp}(\hat{A}, \begin{bmatrix}\exp(1-2\alpha)A\\ 0_{k\times p}\end{bmatrix} \begin{bmatrix}\bZ^{\sfT}\tidY & \bZ^{\sfT}Q\end{bmatrix})\begin{bmatrix}2\alpha \Delta A & -\Delta R^{\sfT} \\ \Delta R & 0 \end{bmatrix}-\\ (1-2\alpha)\Tr \frL_{\exp}(A, \bZ^{\sfT}[\tidY  Q](\exp\hat{A})\dI_{p+k, p})\Delta A
  \end{gathered}
  $$
  Here, we applied matrix derivative rules and \cref{lem:frechet_grad}. The last expression simplifies to $-\Tr \{2\alpha\hat{L}_{11}\Delta A + \hat{L}_{12}\Delta R \} + \Tr \hat{L}_{21}\Delta R^{\sfT} - (1-2\alpha)\Tr\mathring{L}\Delta A$. As $\Delta A$ is antisymmetric, we need to anti-symmetrize the corresponding terms for the gradient, which gives us \cref{eq:grad_stief_log}.\qed
\end{proof}
Note that for the case of the canonical metric $\alpha=\frac{1}{2}$, we do not need to compute the $\exp(1-2\alpha)A$ in the expression for $\ttF(A, R)$ and $\mathring{L}$ does not appear in the gradient. $\hat{L}$ has the simpler form of $\frL_{\exp}(\hat{A}, \dI_{p+k, p}[\tidY Q])$ and is the only matrix needed for the gradient.

We can identify the search space of $A$ and $R$ with $\R^{p(p-1)/2+pk}$. To minimize $\ttF(A, R)$, we call a trust-region solver, specifically SciPy \cite{scipy-nmeth} {\it trust-krylov} solver in the scipy.optimize module, with the objective set to $\ttF$ defined by \cref{eq:cost_stiefel}, the Jacobian set to $\nabla\ttF (A, R)$ in \cref{eq:grad_stief_log}
and an approximate the Hessian product by $\frac{1}{h}(\nabla\ttF(A+h \xi_A, R + h\xi_R)-\nabla\ttF(A, R))$, where we use $h=10^{-8}$. Since the solver expects a vector, we wrap the initial guess, the variables for the objective function, the Jacobian, the Hessian as vectors, then unwrap these vectors inside the functions into pairs of matrices $(A, R)$'s to evaluate. Finally, we unwrap the solution vector to recover the optimal pair $(A, R)$ and the optimal vector $\eta = \tidY A + Q R$.

The initial tangent direction is the projection $\bZ - \frac{1}{2}(\tidY \bZ^{\sfT}\tidY + \tidY\tidY^{\sfT}\bZ)$. From here, the corresponding values for $A$ and $R$ are $A_0 = \frac{1}{2}(\tidY^{\sfT} \bZ - \bZ^{\sfT}\tidY)$ and $R_0 = Q^{\sfT}\bZ$. With these initial values, the geodesic logarithm on the Stiefel manifold with metrics parameterized by $(\alpha_0, \alpha_1)$ could be computed quite effectively.

{\bf For testing purposes, we will set $\alpha_0 = 1$ and set $\alpha_1=\alpha > 0$}. We have tested the routine for different values of $\alpha$, $n$ and $p$ and find the routine is quite robust. While we have not enforced the length minimizing condition, we find the logarithm, if found, mostly matches the generating tangent. For $\bZ$ close enough (for example $0.5\pi$, for $\alpha_0=1$) to the original $\tidY$, we almost always have convergence. Even with points as far as $1.5\pi$ we still often get convergence. The rate of convergence is dependent on the distance between $\tidY$ and $\bZ$. We will summarize the numerical results to show the method is robust for different values of $\alpha$, as well as dimensions of Stiefel manifold, then will analyze the Hessian in several scenarios to show the method is consistent with properties of the logarithm map.
\subsection{Numerical results of the Logarithm map for Stiefel manifolds and discussion}\label{subsec:num_stiefel}
\begin{table}
  \begin{tabular}{c c c c c c c}
\hline
dist ($\pi$)&$\alpha$&\#Iteration&\#Evals&\%Succ&\%Improve&\%Not worse\\
\hline
0.50 & 0.10 & 4.24/3.48 & 71.42/65.94 &  100/100 & 0/0 & 100/100\\
0.50 & 0.50 & 4.40/3.12 & 70.36/55.32 &  100/100 & 0/0 & 100/100\\
0.50 & 1.00 & 3.98/2.80 & 61.02/50.36 &  100/100 & 0/0 & 100/100\\
0.50 & 1.20 & 4.00/2.98 & 62.28/56.42 &  100/100 & 0/0 & 100/100\\
0.99 & 0.10 & 12.48/12.20 & 266.64/260.06 &  96/90 & 13/11 & 96/96\\
0.99 & 0.50 & 13.26/12.86 & 308.86/315.20 &  72/72 & 0/0 & 42/40\\
0.99 & 1.00 & 7.16/7.16 & 151.02/153.06 &  98/94 & 0/0 & 100/98\\
0.99 & 1.20 & 9.24/8.02 & 191.44/164.46 &  92/94 & 0/0 & 96/94\\
1.30 & 0.10 & 9.42/7.98 & 183.90/165.40 &  98/96 & 44/44 & 98/98\\
1.30 & 0.50 & 6.58/5.60 & 122.72/102.44 &  98/100 & 41/41 & 100/100\\
1.30 & 1.00 & 7.40/7.30 & 152.90/156.04 &  94/94 & 19/19 & 94/98\\
1.30 & 1.20 & 8.66/8.56 & 177.74/184.72 &  94/92 & 15/15 & 90/90\\
\hline
\end{tabular}
\caption{Logarithm calculation for different $\alpha$ and distance for $\St{\R}{2}{4}$.
The table compares two initializing schemes, $A_0 =0, R_0=0$ (left) and $A_0, R_0$ are from the projection of $Z-\tidY$ to the tangent space of $\tidY$ (right). Distance to the initial point is expressed in a multiple of $\pi$. Number of calculations in the trust-region algorithm is weighted, with weights for function, jacobian and Hessian evaluations are 1, 2, 4 respectively. The last two columns show if the length of the logarithm vector found is improved versus the generating length (a positive number on the \%Improve column), or worse (less than 100\% on the \%Not worse column).}
\label{tab:num_stiefel42}
\end{table}

\begin{table}
  \begin{tabular}{c c c c c c c}
\hline
dist ($\pi$)&$\alpha$&\#Iteration&\#Evals&\%Succ&\%Improve&\%Not worse\\
\hline
0.50 & 0.10 & 3.92/1.96 & 154.22/36.26 &  100/100 & 0/0 & 100/100\\
0.50 & 0.50 & 3.00/2.00 & 40.00/38.52 &  100/100 & 0/0 & 100/100\\
0.50 & 1.00 & 3.00/1.00 & 36.00/14.00 &  100/100 & 0/0 & 100/100\\
0.50 & 1.20 & 3.00/1.00 & 40.00/18.00 &  100/100 & 0/0 & 100/100\\
0.99 & 0.10 & 5.00/3.00 & 7638.56/686.96 &  100/100 & 0/0 & 100/100\\
0.99 & 0.50 & 4.00/3.00 & 55.00/52.00 &  100/100 & 0/0 & 100/100\\
0.99 & 1.00 & 3.00/1.00 & 36.00/14.00 &  100/100 & 0/0 & 100/100\\
0.99 & 1.20 & 4.00/2.00 & 51.00/29.00 &  100/100 & 0/0 & 100/100\\
1.30 & 0.10 & 5.00/3.00 & 86.00/56.00 &  100/100 & 0/0 & 100/100\\
1.30 & 0.50 & 5.00/3.00 & 74.00/52.00 &  100/100 & 0/0 & 100/100\\
1.30 & 1.00 & 4.00/2.00 & 47.00/48.04 &  100/100 & 0/0 & 100/100\\
1.30 & 1.20 & 5.00/2.00 & 19102.00/29.00 &  100/100 & 0/0 & 100/100\\
\hline
\end{tabular}
  \caption{Logarithm calculation for different alpha and distance for $\St{\R}{40}{1000}$on two initialization schemes}
\label{tab:num_stiefel100040}
\end{table}

We present in \cref{tab:num_stiefel42} and \cref{tab:num_stiefel100040} the results for $\St{\R}{2}{4}$ and $\St{\R}{40}{1000}$. For each distance in the ladder $(0.5\pi, 0.99\pi, 1.3\pi)$, we pick $\alpha_0 = 1$ and four values of $\alpha=\alpha_1/\alpha_0$ at $(0.1, 0.5, 1.0, 1.2)$. From a given initial point, we generate $50$ tangent vectors with length equal to the distance and apply the exponential map to create $50$ ending points for each distance and each $\alpha$, then call our logarithm routine implementing the algorithm in \cref{sec:log_map_stief}. Thus, we can compare the logarithm to the generating tangent vector. From the second column on, the tables tabulate the results of two choices of initial values, at $\eta_0 =0$, and at $\eta_0$ equal to the projection of $Z-\tidY$ to the tangent space of $\tidY$. The reported data include the number of trust-region iterations, a weighted number of evaluations, with function, jacobian and Hessian evaluations are weighted by $1, 2, 4$, respectively. We keep track of the percentage of success. Success indicates the solver return {\it success}, and the exponential map of the tangent vector found matches the target data point (note that in theory, the solver could return a local minimum of the objective function but the geodesic ending point may not match the target point, we do not count it as a success). We also report the number of times the length of the geodesic found is shorter than the length of the geodesic used to generate the ending point in the {\it \%Improve} column. The last column counts the percentage of the time when the length of the proposed geodesic exceeds the length of the generating vector. Except for one case of $\St{\R}{2}{4}$, there is a high percentage of the times we find a geodesic of comparable length to the generating tangent, and we could find geodesics with shorter lengths than the generating ones at further generating distance for $\St{\R}{2}{4}$.

One observation is the method seems to work better for thinner manifolds, where $p/n$ is smaller. For example, we could find at least a geodesic connecting the ending points at a distance of $1.3\pi$ for $\St{\R}{40}{1000}$, while for $\St{\R}{2}{4}$, we have a failure to converge even though the manifold is much smaller. The analysis in the next section indicates the difficulty may be with points where $\eta^{\sfT}\eta$ has more concentrated spectra, and when $\eta$ is generated randomly on a larger manifold, it has less chance of having concentrated spectra.

Convergence is slower for larger $p$, sometimes significantly slower. This may be because of the existence of highly non-convex regions. Besides, the dimension of the search space increases quadratically with $p$, which affects the number of CG steps in solving the trust-region subproblem, and matrix operations are of cost $O(p^3)$ or $O(np^2)$.

\subsection{Injectivity radius the spectra of the Hessian}\label{subsec:inject}
We now analyze the nonuniqueness of the Riemannian logarithm and the behavior of the Hessian of the objective function, which affects the rate of convergence.

The relevant Riemannian geometry concepts are cut-value and injectivity radius. Without going into details, we recall (Proposition 16.5, \cite{Gallier}) the exponential map for tangent vectors is injective within a certain radius near zero. For a unit tangent vector $\eta$, there may be a {\it cut value} $t_{c}(\eta)\in \R^{+}$ beyond it the distance from $\tidY$ to $\Exp_{\tidY} t\eta$ is smaller than $t$. The point $\Exp_{\tidY} t_c(\eta)\eta$ is then called a {\it cut point}. The injectivity radius at $\tidY$ is the least upper bound of the radius where $\Exp_{\tidY}$ is a diffeomorphism, or equivalently, the infimum of $\|t_c(\eta)\eta\|_{\tidY}$ over all directions.

For the case of the sphere ($p=1$), the cut value is $t_c(\eta) = \pi\sqrt{\alpha_0}$ for a unit tangent vector $\eta$, which expresses the simple fact that there are infinitely many geodesics connecting $\tidY$ to its antipodal point, while for any other point there is a unique length-minimizing  geodesic, the shorter of two great circle arcs connecting the points.

For $n > p \geq 2$, we now show the injectivity radius does not exceed $\pi\min(\sqrt{2\alpha_1}, \sqrt{\alpha_0})$. This follows from the fact that there exist closed geodesics (geodesics returning to the original point) of lengths $2\pi\sqrt{2\alpha_1}$ and $2\pi\sqrt{\alpha_0}$ from any point $\tidY$. Using \cref{eq:st_geo2}, such geodesics could be constructed with $A=0$, $Q \in \R^{n\times 1}$, $R\in \R^{1\times p}$ satisfies $(R^{\sfT}R)^{1/2} = 2\pi$, and $\eta = QR$. We have $\Exp_{\tidY}t\eta = \tidY\cos t(R^{\sfT}R)^{1/2}  + QR(R^{\sfT}R)^{-1/2}\sin t(R^{\sfT}R)^{1/2}$ is a closed geodesic, with length $t(\alpha_0R^{\sfT}R)^{1/2}= 2t\pi\sqrt{\alpha_0}$, which makes a closed circle at $t=1$ and a half circle at $t=1/2$, thus the exponential map cannot be a diffeomorphism for a radius greater than $\pi\sqrt{\alpha_0}$, as $-\eta/2$ and $\eta/2$ are mapped to the same point. Similarly, take $Q=0$ and $\eta = \tidY A$ with $A$ is an antisymmetric matrix with the top-left $2\times 2$ block is $A_0:=\begin{bmatrix}0 & -2\pi\\2\pi & 0\end{bmatrix}$ and other entries are zero. We have $Y(t) =\tidY\exp tA$ is a geodesic, and $Y(1)= \tidY$. The length of this geodesic is $\|\eta\|_{\tidY} = (\alpha_1\Tr(-A^2))^{1/2}= (\alpha_1 8\pi^2)^{1/2} = 2\pi\sqrt{2\alpha_1}$. There are two distinct geodesics connecting $\tidY$ to $\tidY\exp(1/2 A)$ (which are $Y_1(t) = \tidY\exp(t A)$ and $Y_2(t)=\tidY\exp(-tA)$).   Thus the exponential map is not a diffeomorphism for a radius greater than or equal to $\pi\sqrt{2\alpha_1}$ at any point. We note when $p=2$, $Y^{\ft}\eta$ is always proportional to $A_0$, and thus we have a higher chance of being near a closed geodesic, especially when $n$ is small. For the case $n=p$, the injectivity radius is bounded by $\pi\sqrt{2\alpha_1}$.
  
  Another estimate of the injectivity radius, from theorem 17.33 in \cite{Gallier}, uses an upper bound for sectional curvature of the Stiefel manifold, which is not yet available for all $\alpha$. For the canonical metric, \cite{Rentmee} showed the injectivity radius is bounded below by $\sqrt{4/5}\pi$.

  For the objective function, since $\ttF$ reaches a minimum when the geodesic ending point reaches the target, the Hessian operator product of $\ttF$ is a positive semidefinite operator at the corresponding optimal pair $(A, R)$. Empirically, we find its minimum eigenvalue eventually decreases when $Z$ moves away from $\tidY$. Denote by $\ttF(\tidY, Z, Q; A, R)$ the objective function corresponding to $\tidY, Z$ and $Q$ ($\tidY^\sfT Q =0$, $Q^\sfT Q = \dI_k$). $(A, R)$ will be the variables to minimize while $(\tidY, Z, Q)$ are parameters. If $\eta_*$ is a tangent vector and $t$ is a nonnegative number then the pair $(tA_*, tR_*)$ corresponding to $t\eta_*$ is a minimum of $\ttF(\tidY, \Exp_{\tidY} t\eta_*, Q; A, R)$, so the Hessian at $(tA_*, tR_*)$ is positive-semidefinite. We will show in \cref{sec:sf_zero_hess} that the Hessian is the identity map for $t=0$, and thus for small $t$ it will be positive-definite. As $t$ increases, we find the Hessian has decreasing eigenvalues and may be singular. Note that if $Z$ is a cut point in the differential geometric sense, there are at least two tangent directions from the $\tidY$ ending at $Z$, they are both minima of $\ttF$, but they could be isolated minima, hence their Hessian may still be non degenerated. However, we use the term {\it (empirical) approximate cut value} to indicate the value $t$ where the smallest Hessian value at a minimum point is under a threshold, (e.g. $10^{-8}$). Consider $\St{\R}{5}{10}$, using our formula for $\nabla\ttF$, we compute numerically the Hessian matrix $\Hess_{\ttF}(\tidY, Z, Q; A, R)$ by vectorizing increments $\Delta_A, \Delta_R$ over a basis of $\AHerm{\sfT}{p}\times \R^{p\times k}$.
We then run a search for the time where the smallest eigenvalue is under the threshold. The search stops when a time is found, within the time upper bound of $1.4\pi$. The graph on the left of \cref{fig:spectra} plots maximal and minimal time, over $100$ samples of $(\eta_*, Q)$ for each $\alpha$. The right-hand side shows the time evolution of the eigenvalues corresponding to one tangent vector. Numerically, we observe the Hessian degeneration typically happens beyond $\pi/2$, with higher $\alpha$ likely to result in higher (empirical approximate) cut values. Since the rate of convergence depends on the inverse of the Hessian eigenvalues, this also explains why convergence is slower for $Z$ further from $\tidY$.
\begin{figure}
\centering
\includegraphics[scale=0.4]{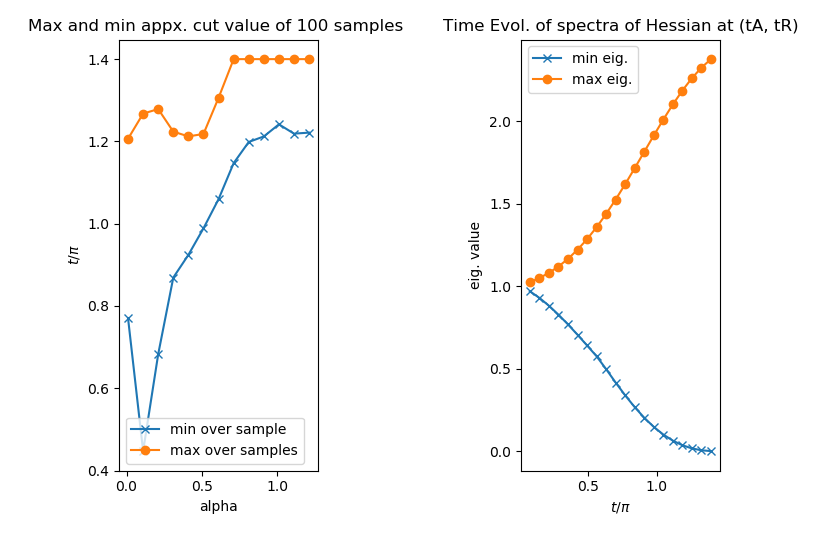}
\caption{Left, max and min approximate cut value, measured by the first time the Hessian becomes degenerated over $100$ sample directions (cutoff at $1.4\pi$) for $\St{\R}{5}{10}$. Right, the largest and smallest eigenvalues  of the Hessian of $\ttF$ along a randomly picked tangent direction at increasing distances.}
\label{fig:spectra}
\end{figure}
 \subsection{The Hessian at $A=0$ and $R=0$ }\label{sec:sf_zero_hess}
 Consider a pair of matrices $(\Delta A, \Delta R)\in \AHerm{\sfT, p}\times \R^{k\times p}$. Expanding $\ttF(\Delta A, \Delta R) = \ttF(\tidY, \bZ, Q; \Delta A, \Delta R)$ to power series in $\Delta A, \Delta R$ using \cref{eq:st_geo3}, the second-order terms are
$$\begin{gathered}-\Tr\bZ^{\sfT}\begin{bmatrix}\tidY & Q  \end{bmatrix}\{\frac{1}{2}\begin{bmatrix} 2\alpha\Delta A & -\Delta R^{\sfT}\\ \Delta R & 0_{k\times k}\end{bmatrix}^2 +
   \frac{1}{2}\dI_{p+k, p} \{(1-2\alpha)\Delta A\}^2\dI_{p+k, p}^{\sfT} +\\(1-2\alpha) \begin{bmatrix} 2\alpha\Delta A & -\Delta R^{\sfT}\\ \Delta R & 0_{k\times k}\end{bmatrix}\dI_{p+k, p} \Delta A\dI_{p+k, p}^{\sfT}\}\dI_{p+k, p} =\\
-\frac{1}{2}\Tr\bZ^{\sfT}\begin{bmatrix}\tidY & Q  \end{bmatrix}\begin{bmatrix} 4\alpha^2\Delta A^2 -\Delta R^{\sfT}\Delta R + (1-2\alpha)^{2}\Delta A^2 +4\alpha(1-2\alpha)\Delta A^2\\ 2\alpha \Delta R\Delta A + 2(1-2\alpha)\Delta R\Delta A  \end{bmatrix}
\\
  = -\frac{1}{2}\Tr\{ \bZ^{\sfT}\tidY(\Delta A^2 - \Delta R^{\sfT} \Delta R) + (2-2\alpha)\bZ^{\ft}Q\Delta R\Delta A\}
\end{gathered}
$$
If $\bZ = \Exp_{\tidY} \eta$ with $\|\eta\|$ small, then $\bZ^\sfT \tidY$ is close to $\dI_p$ while $\bZ^\sfT Q$ is close to zero, and Hessian bilinear form at $(A= \tidY^{\sfT}\eta, R=Q^{\sfT}\eta)$ is close to $\Tr\{-\Delta A^2 + \Delta R^{\sfT} \Delta R\}$, implying the Hessian operator is close to the identity operator. Thus, when $Z$ is close to $\tidY$, the Hessian at $(A, R)$ corresponding to a critical point will be close to the identity map, as discussed.

\section{Geodesics and logarithm for Flag manifolds}\label{sec:flag}
For our purpose, we will consider a flag manifold as a quotient of a Stiefel manifold by a group of block-diagonal orthogonal matrices. We compute the Euclidean distance between two flag elements using the realization in chapter 2 of \cite{HelmkeMoore}, as the manifold of matrices of fixed eigenvalues and multiplicities, reviewed below. A flag manifold is in a sense an intermediate object between a Stiefel manifold and its associated Grassmann manifold.

Again, we will assume $\KK=\R$. Let $n$ be a positive integer and assume we have a partition $\sum_{i=0}^q d_i = n$ with $d_0\geq 0$ and $d_i > 0$ if $i > 0$. This partition defines $q+1$ column blocks on an $n\times n$ matrix $\mrY$, with the $i$-th column block having size $d_i$. It also defines $q$ column blocks on an $n\times n_q$ matrix $Y$, with $n_q = n - d_0$, with the $i$-th column block ($1\leq i\leq q$) also having size $d_i$. Consider a tuple $\mathring{\lambda} = (\lambda_0, \lambda_1,  \cdots ,\lambda_q)$ of distinct real numbers and define the block diagonal matrix $\mrLambda = \diag(\lambda_0\dI_{d_0}, \lambda_1\dI_{d_1},\cdots,\lambda_q\dI_{d_q})$. The set of matrices of the form $\mrY\mrLambda \mrY^{\sfT}$, with $\mrY\in \OO(n)$ is a submanifold of $\R^{n\times n}$. From \cite{HelmkeMoore}, it could be identified with the quotient manifold $\OO(n)/(\OO(d_0)\times \OO(d_1)\times\cdots \OO(d_q))$, called the flag manifold $\Flag(n_1,\cdots,n_q; n)$, where $n_i = \sum_{j=1}^i d_i$. The idea of the identification of a flag manifold with a quotient of Stiefel manifold is the fact that when an eigenvalue of a symmetric matrix has multiplicities, the (normalized) eigenvectors are defined up to an orthonormal change of basis, as follow
\begin{lemma}\label{lem:flag_equiv} If $\mrLambda=\diag(\lambda_0\dI_{d_0}, \lambda_1\dI_{d_1},\cdots,\lambda_q\dI_{d_q})$ with $\lambda_i\neq \lambda_j$ if $i\neq j$, $\mrY$ and $\mrZ$ are two orthogonal matrices in $\OO(n)$ {\bf such that} $\mrY\mrLambda \mrY^{\sfT} = \mrZ\mrLambda \mrZ^{\sfT}$, where $\mrY$ and $\mrZ$ are expressed in block form $\mrY = [\bY_0, \bY_1, \cdots, \bY_q] = [Y_0 Y]$, $\mrZ = [Z_0, Z_1, \cdots, Z_q] = [Z_0 Z]$ with $Y_i\in \R^{n\times d_i}, Z_i\in\R^{n\times d_i}$, $0\leq i\leq q$, {\bf then} $Y_i^{\sfT}Z_j = 0$ if $i\neq j$; each $U_i=Y_i^{\sfT}Z_i\in \R^{n_i\times n_i}$ is an orthogonal matrix and $Z_i = Y_iU_i$.
  
Thus, we have $\mrZ = \mrY\diag(U_0, U_1\cdots, U_q)$. Conversely, if $\mrZ = \mrY\diag(U_0, U_1,\cdots, U_q)$ where each $U_i\in \OO(d_i)$ is an orthogonal matrix, then $\mrY\mrLambda \mrY^{\sfT} = \mrZ\mrLambda \mrZ^{\sfT}$.

If in the above, $\lambda_0 = 0$ and $\Lambda=\diag(\lambda_1\dI_{d_1},\cdots,\lambda_q\dI_{d_q})$ for distinct and non-zero $\lambda_i$'s, $1\leq i \leq q$, set $Y = [\bY_1, \cdots, \bY_q]$, then we have $\mrY\mrLambda \mrY^{\sfT} = Y\Lambda Y^{\sfT}$. We have $Y\Lambda Y^{\sfT} = Z\Lambda Z^{\sfT}$ for $Z = [Z_1, \cdots, Z_q]$ if and only if $Z = Y\diag(U_1\cdots U_q)$, and in that case, we have $U_i = Y_i^{\sfT}Z_i$.
\end{lemma}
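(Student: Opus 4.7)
The plan is to reduce the eigendecomposition identity $\mrY\mrLambda\mrY^{\sfT} = \mrZ\mrLambda\mrZ^{\sfT}$ to a commutation relation, then exploit the block structure of matrices commuting with $\mrLambda$. This is the standard uniqueness-of-spectral-decomposition argument, adapted to keep track of the explicit block entries claimed in the lemma.

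First, I would left-multiply the identity by $\mrY^{\sfT}$ and right-multiply by $\mrZ$, using $\mrY^{\sfT}\mrY = \dI_n$ and $\mrZ^{\sfT}\mrZ = \dI_n$, to obtain $\mrLambda M = M\mrLambda$ where $M := \mrY^{\sfT}\mrZ$. As a product of two orthogonal matrices, $M$ is itself orthogonal. Partition $M$ as $(M_{ij})_{0\le i,j\le q}$ with $M_{ij}\in\R^{d_i\times d_j}$ matching the block structure of $\mrLambda$. The commutation relation then reads $(\lambda_i-\lambda_j)M_{ij} = 0$ for all $i,j$, so by distinctness of the $\lambda_i$, we get $M_{ij}=0$ whenever $i\neq j$. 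Hence $M = \diag(U_0,U_1,\ldots,U_q)$ with $U_i\in\R^{d_i\times d_i}$, and orthogonality of $M$ forces each $U_i$ to be orthogonal. Reading off the entries of $\mrY^{\sfT}\mrZ = \diag(U_0,\ldots,U_q)$ yields $Y_i^{\sfT}Z_j=0$ for $i\neq j$ and $Y_i^{\sfT}Z_i = U_i$, and left-multiplying this identity by $\mrY$ gives $\mrZ = \mrY\diag(U_0,\ldots,U_q)$, i.e.\ $Z_i = Y_iU_i$.

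The converse is a direct computation. If $\mrZ = \mrY\diag(U_0,\ldots,U_q)$ with each $U_i$ orthogonal, then since the blocks of $\mrLambda$ are scalar multiples of the identity, $\diag(U_0,\ldots,U_q)$ commutes with $\mrLambda$, and
\[
\mrZ\mrLambda\mrZ^{\sfT} = \mrY\,\mrLambda\,\diag(U_0,\ldots,U_q)\diag(U_0^{\sfT},\ldots,U_q^{\sfT})\,\mrY^{\sfT} = \mrY\mrLambda\mrY^{\sfT}.
\]
For the final paragraph with $\lambda_0 = 0$, the block $Y_0$ contributes nothing to $\mrY\mrLambda\mrY^{\sfT}$, so $\mrY\mrLambda\mrY^{\sfT} = Y\Lambda Y^{\sfT}$ and likewise for $Z$. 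I would then argue that $Y\Lambda Y^{\sfT} = Z\Lambda Z^{\sfT}$ is equivalent, after choosing any orthogonal completions $Y_0$ and $Z_0$ of the columns of $Y$ and $Z$, to the full identity with $\lambda_0=0$, apply the first part, and discard the $U_0$ block (which only records the freedom in the completion) to conclude that $Z = Y\diag(U_1,\ldots,U_q)$ with $U_i = Y_i^{\sfT}Z_i$.

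The argument is essentially a short linear-algebra exercise, and I do not expect any serious obstacle; the only mild subtlety is the bookkeeping in the last paragraph, where one must be careful not to conflate the trivial freedom in the completion (the $U_0$ block) with the genuine flag-manifold freedom in the nonzero eigenblocks $U_1,\ldots,U_q$.
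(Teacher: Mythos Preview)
Your proposal is correct and follows essentially the same route as the paper: both reduce $\mrY\mrLambda\mrY^{\sfT}=\mrZ\mrLambda\mrZ^{\sfT}$ to a commutation relation (the paper multiplies by $\mrZ^{\sfT}$ on the left and $\mrY$ on the right, you multiply by $\mrY^{\sfT}$ and $\mrZ$, yielding transposed but equivalent identities), then read off block-diagonality from $(\lambda_i-\lambda_j)M_{ij}=0$ and conclude orthogonality of the diagonal blocks. Your treatment of the $\lambda_0=0$ case via orthogonal completions is slightly more explicit than the paper's one-line remark but amounts to the same observation.
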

\begin{proof}From $\mrY\mrLambda \mrY^{\sfT} = \mrZ\mrLambda \mrZ^{\sfT}$, multiply by $\mrZ^{\sfT}$ on the left and $\mrY$ on the right, we have
  $\mrZ^{\sfT}\mrY\mrLambda  = \mrLambda \mrZ^{\sfT}\mrY$ by the orthogonal assumption, which implies $\lambda_j Z_i^{\sfT}Y_j = \lambda_i Z_i^{\sfT}Y_j$, so $Z_i^{\sfT}Y_j= 0$ for $i\neq j$. Since $Z_i$ is orthogonal to all $Y_j$ for $j\neq i$, expressing $Z_i$ in the basis $\mrY$, we have $Z_i = Y_iU_i$ for $U_i\in \R^{d_i\times d_i}$, thus $U_i = Y_i^{\sfT}Z_i$. From $Z_i^{\sfT}Z_i = \dI_{d_i}$ we have $\dI_{d_i}=U_i^{\sfT}Y_i^{\sfT}Y_iU_i = U_i^{\sfT}U_i$. Conversely if each $U_i$ is orthogonal, $\mrZ\mrLambda\mrZ^{\sfT} = \mrY\diag(U_0, U_1,\cdots, U_q)\mrLambda\diag(U_0^{\sfT}, U_1^{\sfT},\cdots, U_q^{\sfT})\mrY^{\sfT}=\mrY\mrLambda\mrY^{\sfT}$. The remaining part is just the special case when $\lambda_0 = 0$.\qed
\end{proof}
We will pick $\lambda_0=0$ in the {\it isospectral description}, where a flag manifold is a submanifold of $\R^{n\times n}$ consisting of symmetric matrices of the form $Y\Lambda Y^\sfT$ with $Y^\sfT Y = \dI_{n_q}$. There is little restriction on the choice of $\Lambda$, except we do not want the eigenvalues to be too close or too different in magnitude. Different choices of $\Lambda$ give us the same manifold topologically. We will use this description only to compare elements in a flag manifold in \cref{prop:small_flag}.

Our main approach is to consider a flag manifold as a quotient manifold of $\St{\R}{n_q}{n}$ by the group of orthogonal diagonal block matrices $\OO(d_1)\times\cdots\times \OO(d_q)$, via the identification $\lb Y\rb\mapsto Y\Lambda Y^{\sfT}$, where $\lb\rb$ denotes the equivalent class in the quotient structure, as above. The action of this group on the Stiefel manifold by right multiplication induces vertical vectors of the form $Y\diag(\rmu_1,\cdots, \rmu_q)$ with $\rmu_i^{\sfT} + \rmu_i = 0$. With the Stiefel metric $\alpha_0\Tr\omega^{\sfT}\omega + (\alpha_1-\alpha_0)\Tr\omega^{\sfT}YY^{\sfT}\omega$ on $\R^{n\times n_q}$, a horizontal vector $\eta$ must satisfy $-\alpha_1\Tr\diag(\rmu_1,\cdots, \rmu_q)Y^{\sfT}\eta = 0$ for all anti-symmetric matrices $\rmu_1,\cdots, \rmu_q$. As each $Y_i^{\sfT}\eta_i$ is already antisymmetric, this implies $Y_i^{\sfT}\eta_i = 0$. Since both $Y$ and $\eta$ are block matrices, $A:=Y^{\sfT}\eta$ is also a block matrix, with block by $A_{[ij]} := Y^{\sfT}_i\eta_j$ ($1\leq i, j\leq p$). Thus, for a horizontal vector $\eta$, besides $A$ being antisymmetric, the diagonal blocks $A_{[ii]}$ must be zero. Fix a matrix $Q$ as an orthogonal basis of the column space of $\eta - YY^\sfT\eta$, we define a block structure on $R=Q^{\sfT}\eta$ by setting $R_{[i]}= Q^{\sfT}\eta_{i}$. For $p=2$, the expression $\eta= YA + QR$ has the following block structure
$$\eta = [\eta_1 | \eta_2] = [Y_1|Y_2|Q]\begin{bmatrix}A_{[11]} & A_{[12]} \\A_{[21]} & A_{[22]} \\R_{[1]} & R_{[2]}\end{bmatrix}
$$
with $A_{[11]} = 0$, $A_{[22]}=0$, $A_{[21]} = - A_{[12]}^\sfT$. To compare, the tangent space of the Stiefel manifold only requires $A = \tidY^{\sfT}\eta$ to be antisymmetric, while a Grassmann horizontal vector requires $A=\tidY^{\sfT}\eta = 0$. For a flag manifold, only the diagonal blocks $A_{[ii]}$ of $A= \tidY^{\sfT}\eta$ are required to be zero.

From the proof of \cref{prop:st_geo}, $A=Y^{\sfT}(t)\dot{Y}(t)$ is constant along any geodesic. Thus, if $Y(0) = \tidY$ and $\dot{Y}(0) = \eta$, the horizontal condition that the diagonal blocks of $Y^{\sfT}(t)\dot{Y}(t)$ vanish is satisfied if $\tidY^{\sfT}\eta$ has vanishing diagonal blocks. Any such horizontal geodesic on $\St{\R}{n_q}{n}$ projects to a geodesic on $\ccF := \Flag(n_1,\cdots,n_q; n)$, and we will represent the exponential map on $\ccF$ by the equivalent class of the exponential map on $\St{\R}{n_q}{n}$.

The Riemannian logarithm between two points $\lb \tidY\rb$ and $\lb \bZ\rb$ in $\ccF$ could be computed by solving for a horizontal tangent vector $\eta$ at $\tidY$ such that $\Exp^{\SSt}_{\tidY}\eta = \bZ U$, for $U\in \OO(d_1)\times\cdots\times \OO(d_q)$, such that $\eta$ has minimal length. Both $\eta$ and $U$ are unknown so this direct approach is difficult. To get an algorithm similar to the Stiefel case, we propose minimizing $\frac{1}{2}||Y(1)\Lambda Y(1)^{\sfT} - \bZ\Lambda \bZ^{\sfT}||_F^2-\Tr\Lambda^2$, as a function of the matrices $A$ and $R$ defined in \cref{eq:st_geo2}. Let $\bd = (d_0, d_1, \cdots, d_q)$ be the vector of the dimension of the blocks, and recall $n_i = \sum_{j=1}^i d_i$ for $i=1,\cdots, q$, set $\AHerm{\sfT, \bd}$ to be the space of antisymmetric matrices $A$ with zero diagonal blocks ($A_{[ii]} = 0$) and $\asym{\sfT, \bd}$ to be the projection from $\R^{n_q\times n_q}$ to $\AHerm{\sfT, \bd}$ by sending a matrix $B$ to $\frac{1}{2}(B-B^{\sfT})$, then set the diagonal blocks to zero. We have
\begin{proposition}\label{prop:small_flag} If $\tidY$ and $Z$ are two points on the Stiefel manifold $\St{\KK}{n_q}{n}$ and let $Q\in \R^{n\times k}$ be an orthogonal basis of $Z - YY^{\sfT}Z$ ($k$ is its rank). Let $(A, R) \in \AHerm{\sfT, \bd}\times \R^{n_q\times k}$, and $Y(t)= Y(A, R; t)$ denotes the geodesic from $\tidY$ with $\dot{Y}(0) = \eta=\tidY A +Q R$ defined by $(A, R)$. Fix an arbitrary diagonal matrix $\Lambda = \diag(\lambda_1 \dI_{d_1},\cdots,\lambda_q\dI_{d_q})$ for a sequence of distinct, non-zero real numbers $\lambda_1,\cdots,\lambda_q$ as in \cref{lem:flag_equiv}. Then a pair $(A, R)$ such that the quotient image $\lb \eta\rb = \lb \tidY A + Q R\rb$ satisfies $\Exp^{\ccF}_{\lb \tidY\rb}\lb \eta\rb =  \lb Z\rb$ for the corresponding equivalent classes $\lb\tidY\rb, \lb Z\rb\in \ccF=\Flag(n_1,\cdots,n_q; n)$ is a minimum of the function:
\begin{equation}\label{eq:flag_log}
  \ttF_{\FFl}(A, R) := \frac{1}{2}||Y(1)\Lambda Y(1)^{\sfT} - Z\Lambda Z^{\sfT}||_F^2 -\Tr\Lambda^2= - \Tr Z^{\sfT}Y(1)\Lambda Y(1)^{\sfT}Z\Lambda
\end{equation}
Its gradient could be evaluated by Fr{\'e}chet derivatives:
\begin{equation}\label{eq:flag_log_gr}
\nabla \ttF_{\FFl}(A, R)= (\nabla_A\ttF_{\FFl}, \nabla_R\ttF_{\FFl}) =((1-2\alpha) \asym{\bd, \sfT}\mathring{L}_{\FFl} + 2\alpha \asym{\bd,\sfT}(\hat{L}_{\FFl, 11}),  \hat{L}_{\FFl, 21} - \hat{L}_{\FFl, 12}^{\sfT})
\end{equation}
where we define $K := [\tidY  Q]$; $H := K^{\sfT}Z\Lambda Z^{\sfT}K$;  $\hat{A} :=  \begin{bmatrix}2\alpha A & -R^{\ft} \\ R & 0\end{bmatrix}$;
$ \begin{bmatrix}U_{11} & U_{12}\\ U_{21} & U_{22}\end{bmatrix}:=\exp\hat{A}$ and
$$\mathring{L}_{\FFl} := \frL_{\exp}(A, 2\Lambda (\exp(1-2\alpha)A)^{\sfT}(U_{11}^{\sfT}HU_{11} + U_{21}^{\sfT}HU_{21}))$$
$$\hat{L}_{\FFl} = \begin{bmatrix}\hat{L}_{\FFl, 11} & \hat{L}_{\FFl, 12}\\  \hat{L}_{\FFl, 21} & \hat{L}_{\FFl, 22}\end{bmatrix} := \frL_{\exp}(\hat{A}, 2\{\exp(1-2\alpha)A\}\Lambda\{\exp(2\alpha-1)A\}H[U_{11}^{\sfT} U_{21}^{\sfT}])$$
where the blocks for $\exp \hat{A}$ and $\hat{L}_{\FFl}$ have the same shapes as those of $\hat{A}$.
\end{proposition}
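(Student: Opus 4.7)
The plan is to mirror the argument for the Stiefel objective in the previous theorem, adding two flag-specific ingredients: the isospectral description of $\ccF$ together with \cref{lem:flag_equiv} to detect when $Y(1)$ and $\bZ$ project to the same flag, and the horizontal condition encoded by $A \in \AHerm{\sfT,\bd}$ to ensure the geodesic from \cref{eq:st_geo3} descends to $\ccF$.

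First I would simplify $\ttF_{\FFl}(A,R)$. Expanding $\tfrac{1}{2}\|Y(1)\Lambda Y(1)^{\sfT} - \bZ\Lambda \bZ^{\sfT}\|_F^2$ and using $Y(1)^{\sfT}Y(1) = \bZ^{\sfT}\bZ = \dI_{n_q}$, each of the two square terms collapses to $\Tr\Lambda^2$; subtracting $\Tr\Lambda^2$ cancels one of them and cyclic invariance of the trace turns the cross term into \cref{eq:flag_log}. Next, to identify the minima with the flag exponential equation, I would observe that $\tfrac{1}{2}\|Y(1)\Lambda Y(1)^{\sfT}-\bZ\Lambda\bZ^{\sfT}\|_F^2 \ge 0$ vanishes iff $Y(1)\Lambda Y(1)^{\sfT} = \bZ\Lambda\bZ^{\sfT}$, which by \cref{lem:flag_equiv} (applied with $\lambda_0 = 0$) means $Y(1) = \bZ\,\diag(U_1,\dots,U_q)$ for some block-orthogonal $U$. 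Since $A \in \AHerm{\sfT,\bd}$ makes $\eta$ horizontal at time $0$ and, by the identity $A = Y(t)^{\sfT}\dot{Y}(t)$ for all $t$ established in the proof of \cref{prop:st_geo}, horizontality is preserved along the geodesic, such $(A,R)$ yields a horizontal geodesic on $\St{\R}{n_q}{n}$ whose projection on $\ccF$ joins $\lb\tidY\rb$ to $\lb\bZ\rb$; a minimizing $\eta$ is therefore a lift of $\Log^{\ccF}_{\lb\tidY\rb}\lb\bZ\rb$.

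For the gradient, I would write $Y(1) = K(\exp\hat{A})\dI_{p+k,p}\exp((1-2\alpha)A)$ with $K=[\tidY\ Q]$ and differentiate $-\Tr\bZ^{\sfT}Y(1)\Lambda Y(1)^{\sfT}\bZ\Lambda$ in a direction $(\Delta A,\Delta R)\in\AHerm{\sfT,\bd}\times\R^{n_q\times k}$. The Leibniz rule yields two copies of the derivative of $Y(1)$; using the symmetry of $H = K^{\sfT}\bZ\Lambda\bZ^{\sfT}K$ and cyclic rearrangement under the trace, the two contributions coincide, which accounts for the factor $2$ inside both $\mathring{L}_{\FFl}$ and $\hat{L}_{\FFl}$. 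For each exponential, the derivative is a Fr{\'e}chet derivative, and I would apply \cref{lem:frechet_grad} to swap the direction argument through the surrounding matrices, leaving a trace pairing against either $\Delta A$ (for the $\exp((1-2\alpha)A)$ factor) or against the block direction $\begin{bmatrix}2\alpha\Delta A & -\Delta R^{\sfT}\\ \Delta R & 0\end{bmatrix}$ (for $\exp\hat{A}$). Reading off the coefficients of $\Delta A$ and of $\Delta R$ produces \cref{eq:flag_log_gr}; the projector $\asym{\sfT,\bd}$ appears because $\Delta A$ is constrained to $\AHerm{\sfT,\bd}$, so only the $\AHerm{\sfT,\bd}$-component of the unconstrained gradient survives.

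The main obstacle will be the bookkeeping inside $\exp\hat{A}$: because $\hat{A}$ couples $A$ and $R$ through a single $(p+k)\times(p+k)$ exponential, the same Fr{\'e}chet derivative $\hat{L}_{\FFl}$ must supply the $\hat{L}_{\FFl,11}$ contribution to $\nabla_A\ttF_{\FFl}$ and the $\hat{L}_{\FFl,12},\hat{L}_{\FFl,21}$ contributions to $\nabla_R\ttF_{\FFl}$. Ensuring that the $\asym{\sfT,\bd}$ projection is applied only to the $A$-block, that the separate contribution $\mathring{L}_{\FFl}$ from $\exp((1-2\alpha)A)$ is folded into $\nabla_A\ttF_{\FFl}$ with the correct prefactor $(1-2\alpha)$, and that the $-R^{\sfT}$ block of $\hat{A}$ contributes $\hat{L}_{\FFl,12}^{\sfT}$ with the right sign, is the one place where sign or symmetry slips are easiest to make.
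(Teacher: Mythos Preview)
Your proposal is correct and follows essentially the same route as the paper: expand the Frobenius norm using $Y(1)^{\sfT}Y(1)=\bZ^{\sfT}\bZ=\dI_{n_q}$ to obtain \cref{eq:flag_log}, invoke nonnegativity and \cref{lem:flag_equiv} for the minimum characterization, then differentiate the trace expression via the Leibniz rule, use transpose invariance of the trace to merge the two $Y(1)$-derivatives into a factor of $2$, apply \cref{lem:frechet_grad} to move the direction argument out, and finally project onto $\AHerm{\sfT,\bd}$ for the $A$-component. The bookkeeping pitfalls you flag (signs in the $-R^{\sfT}$ block, block extraction from $\hat{L}_{\FFl}$, the $(1-2\alpha)$ prefactor on $\mathring{L}_{\FFl}$) are exactly the places where care is needed, and the paper handles them the same way.
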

For the canonical metric, only $\hat{L}_{\FFl} = \frL_{\exp}(\hat{A}, 2\Lambda H[U_{11}^{\sfT} U_{21}^{\sfT}])$ needs to be evaluated.
\begin{proof}
  It is clear $\ttF_{\FFl}+\Tr\Lambda^2$ is bounded below by zero, and from the completeness of flag manifolds, zero is its minimum value. Expanding the first expression for $\ttF_{\FFl}$ in \cref{eq:flag_log} then use $\bZ^{\sfT}\bZ = \dI_{n_q}$ and $Y(1)^{\sfT}Y(1) = \dI_{n_q}$, we get the second expression. Expanding $Y(1)$ using \cref{eq:st_geo3} we have
\begin{equation}\label{eq:ttFccBig}\ttF_{\FFl} = -\Tr Z^{\sfT}K\exp\hat{A} \dI_{n_q+k, n_q}\exp\{(1-2\alpha)A\}\Lambda\exp\{(1-2\alpha)A\}^{\sfT}\dI_{n_q+k, n_q}^{\sfT}(\exp\hat{A})^{\sfT}K^TZ\Lambda \end{equation}
and the expression $K^TZ\Lambda$ from the end could be moved to the front to become $H$. From here, matrix derivative rules imply the directional derivatives of $\ttF_{\FFl}$ in the direction $(\Delta_A, \Delta_R)$ is
\begin{equation}\label{eq:flag_ddr}\begin{gathered}-\Tr H\frL_{\exp}(\hat{A}, \Delta_A) \dI_{n_q+k, n_q}\exp\{(1-2\alpha)A\}\Lambda\exp\{(1-2\alpha)A\}^{\sfT} \dI_{n_q +k, n_q}^{\sfT}(\exp\hat{A})^{\sfT} \\
  -\Tr H\exp\hat{A} \dI_{n_q+k, n_q}D_1 \dI_{n_q+k, n_q}^{\sfT}(\exp\hat{A})^{\sfT}\\
  -\Tr H\exp\hat{A} \dI_{n_q+k, n_q}\exp\{(1-2\alpha)A\}\Lambda\exp\{(1-2\alpha)A\}^{\sfT} \dI_{n_q+k, n_q}^{\sfT}(\frL_{\exp}(\hat{A}, \hat{\Delta}_A)^{\sfT}
  \end{gathered}
\end{equation}  
with $D_1 := D_2 + D_2^{\sfT}$ where $D_2 := \frL_{\exp}((1-2\alpha)A, (1-2\alpha)\Delta_A)\Lambda\exp\{(1-2\alpha)A\}^{\sfT}$ and $\Delta_{\hat{A}} =  \begin{bmatrix}2\alpha \Delta A & -\Delta R^{\ft} \\ \Delta R & 0\end{bmatrix}$.
  The expression on the third line of \cref{eq:flag_ddr} is the same as the first line using transpose-invariance of trace then rearranging the terms. Grouping them then move all factors after $\frL_{\exp}(\hat{A}, \Delta_A)$ of the first line to the front of the trace expression gives us one part of the directional derivative to be
$$\begin{gathered}-2\Tr \dI_{n_q+k, n_q}\exp\{(1-2\alpha)A\}\Lambda\exp\{(1-2\alpha)A\}^{\sfT}\dI_{n_q+k, n_q}^{\sfT}\exp(\hat{A})^{\sfT}H\frL_{\exp}(\hat{A}, \Delta \hat{A})\end{gathered}$$
  which, after expressing $\exp\hat{A}$ in block form and multiplying, then applying \cref{lem:frechet_grad}, simplifies to $-\Tr \hat{L}\Delta_{\hat{A}}$. The middle line of \cref{eq:flag_ddr} could be expressed as
  $$-\Tr H\exp\hat{A} \dI_{n_q+k, n_q} D_1 \dI_{n_q+k, n_q}^{\sfT}\exp\hat{A}^{\sfT} = -\Tr H\begin{bmatrix}U_{11}\\ U_{21}\end{bmatrix} D_1 [U_{11}^{\sfT} U_{21}^{\sfT}]$$
which simplifies to $-\Tr (U_{11}^{\sfT}H U_{11} + U_{21}^{\sfT}H U_{21}) D_1$. Expanding $D_1$ then using transpose invariance of trace, the middle part becomes $-(1-2\alpha)\Tr\mathring{L}\Delta A$. Expressing the matrices in block form and multiply, we are left with the trace
$-\Tr ((1-2\alpha)\mathring{L} + 2\alpha\hat{L}_{\FFl, 11})\Delta A -\Tr \hat{L}_{\FFl, 12} \Delta R + \Tr \hat{L}_{\FFl, 21}\Delta R^{\sfT}$. The gradient formula follows from the fact that for $A\in \AHerm{\sfT, \bd}$ and $B\in\R^{n_q\times n_q}$, we have
$\Tr A^{\sfT}B = \Tr A^{\sfT}\asym{\sfT}B = \Tr A^{\sfT}\asym{\sfT, \bd}B$, the last equality is because the diagonal blocks of $A$ are zeros.
\qed
\end{proof}    
\subsection{Numerical results for flag manifolds}
As before, we minimize $\ttF_{\FFl}(A, R)$ with a trust-region method with numerical Hessian product calculated from $\nabla \ttF_{\FFl}$. As the solver expects an input vector, we vectorize ($A, R)$, then undo vectorization inside the objective function, the gradient, and the Hessian.

Initialization proves to be important when applying the algorithm to flag manifolds. As discussed, we do not optimize for geodesic distance, only for target marching geodesics, relying on the injectivity of the exponential map near the starting point. Empirically, we find the initial point $(A, R) = (0, 0)$ produces minimal length geodesics more consistently. Therefore, we will only use the $A=0, R=0$ initialization.

First, we confirm that for Grassmann manifolds, which are flag manifolds with two blocks ($q=1$), our numerical algorithm closely matches with closed-form formulas, as in \cref{prop:grass}. Testing for various dimensions, each with $1000$ randomly generated points we only find less than $1\%$ of convergence failures, and when we have convergence, the geodesic distance found matches the distance given by the closed-form formula.

Similar to the Stiefel case, we present the results for two flag manifolds, $\Flag(3, 4; 9)$ and $\Flag(20, 35, 40; 1000)$. As before, for each radius and $\alpha$ (each in its own ladder), we generate a random point $\tidY$, then a random vector $\eta_*$ of length equal to the radius and the corresponding ending point $\Exp_{\tidY}\eta_*$. We compute the logarithm by the proposed method, then compare the length of the output logarithm vector with the generating vector $\eta_*$. We find the logarithm vector could be of shorter length than the generating $\eta_*$ in the case $\Flag(3, 4; 9)$, and in general, does not produce a vector of worse length than the generating vector. We observe logarithm for flag manifolds requires more iterations. However, results show convergence is achieved consistently. We analyze the Hessian cut value numerically and confirm it behaves similarly to the Stiefel case.
\begin{table}
  \begin{tabular}{c c c c c c c}
\hline
dist ($\pi$)&$\alpha$&\#Iteration&\#Evals&\%Succ&\%Improve&\%Not worse\\
\hline
0.50 & 0.10 & 5.90 & 136.78 &  100 & 2 & 100\\
0.50 & 0.50 & 5.54 & 109.30 &  100 & 0 & 100\\
0.50 & 1.00 & 5.42 & 97.42 &  100 & 0 & 100\\
0.50 & 1.20 & 5.48 & 105.20 &  100 & 0 & 100\\
0.99 & 0.10 & 10.96 & 382.32 &  96 & 32 & 100\\
0.99 & 0.50 & 9.24 & 275.28 &  100 & 28 & 100\\
0.99 & 1.00 & 7.30 & 195.06 &  100 & 25 & 100\\
0.99 & 1.20 & 7.32 & 182.24 &  100 & 23 & 100\\
1.30 & 0.10 & 9.14 & 257.38 &  100 & 55 & 100\\
1.30 & 0.50 & 7.92 & 206.04 &  100 & 50 & 100\\
1.30 & 1.00 & 7.58 & 180.86 &  100 & 48 & 100\\
1.30 & 1.20 & 7.26 & 174.46 &  100 & 47 & 100\\
\hline
\end{tabular}
  \caption{Logarithm map result for different alpha and distance for $\Flag(3,  4; 9)$ }
\label{tab:num_flag94}
\end{table}

\begin{table}
  \begin{tabular}{c c c c c c c}
\hline
dist ($\pi$)&$\alpha$&\#Iteration&\#Evals&\%Succ&\%Improve&\%Not worse\\
\hline
0.50 & 0.10 & 5.12 & 182.52 &  100 & 0 & 100\\
0.50 & 0.50 & 5.00 & 102.00 &  100 & 0 & 100\\
0.50 & 1.00 & 5.00 & 113.36 &  100 & 0 & 100\\
0.50 & 1.20 & 5.00 & 127.92 &  100 & 0 & 100\\
0.99 & 0.10 & 6.00 & 140.68 &  100 & 0 & 100\\
0.99 & 0.50 & 5.00 & 106.00 &  100 & 0 & 100\\
0.99 & 1.00 & 5.00 & 96.16 &  100 & 0 & 100\\
0.99 & 1.20 & 5.00 & 98.00 &  100 & 0 & 100\\
1.30 & 0.10 & 7.00 & 164.00 &  100 & 0 & 100\\
1.30 & 0.50 & 6.70 & 289.58 &  100 & 0 & 100\\
1.30 & 1.00 & 6.00 & 131.64 &  100 & 0 & 100\\
1.30 & 1.20 & 6.00 & 128.04 &  100 & 0 & 100\\
\hline
\end{tabular}
  \caption{Logarithm map result for different alpha and distance for $\Flag(20,  35,   40; 1000)$ }
\label{tab:num_flag100040}
\end{table}

\subsection{Estimating the Hessian for $\bA=0$ and $\bR=0$ }\label{subsec:flag_A_R}
The role of the eigen matrix $\Lambda$ is revealed below in the Hessian of $\ttF_{\FFl}$ at $A=0, R=0$.
\begin{lemma} If $Q$ satisfies $\tilde{Y}^{\sfT}Q = 0$, let $H_{11} = \tidY^{\sfT}Z\Lambda Z^{\sfT}Y$ be the top left block of $H$ in \cref{prop:small_flag}. The Hessian bilinear form of $\ttF_{\FFl}$ for $A=0, R=0$ at $(\Delta_A, \Delta_R)$ is given by
  \begin{equation}\begin{gathered}\label{eq:Hess_zero_flag}
-\Tr H\{\Delta_{\hat{A}}^2 \dI_{n_q + k, n_q}\Lambda \dI_{n_q + k, n_q}^{\sfT} + \dI_{n_q + k, n_q}\Lambda \dI_{n_q + k, n_q}^{\sfT}\Delta_{\hat{A}}^2 -2\Delta_{\hat{A}}\dI_{n_q + k, n_q}\Lambda\dI_{n_q + k, n_q}^{\sfT}\Delta_{\hat{A}}\}
    \\-2(1-2\alpha)\Tr H\{\Delta_{\hat{A}}\dI_{n_q + k, n_q}(\Delta_A\Lambda - \Lambda\Delta_A)\dI_{n_q + k, n_q}^{\sfT}
+\dI_{n_q + k, n_q}(\Lambda\Delta_A - \Delta_A\Lambda)\dI_{n_q + k, n_q}^{\sfT}\Delta_{\hat{A}}\} \\
- (1-2\alpha)^2 \Tr H_{11}\{(\Delta_A^2\Lambda + \Lambda\Delta_A^2 -2\Delta_A\Lambda\Delta_A)
\}
\end{gathered}
  \end{equation}
At $Z = \tidY$ and $A=0, R=0$, with $\tilde{Y}^{\sfT}Q = 0$, the Hessian bilinear form is reduced to
\begin{equation}\label{eq:hess_zero_flag2}
2\Tr(\Delta_R^{\sfT}\Delta_R\Lambda^2 - \Delta_A^{\sfT}\Lambda\Delta_A\Lambda + \Delta_A^{\sfT}\Delta_A\Lambda^2)  
\end{equation}
The associated Hessian product operator is $(\Delta_A, \Delta_R)\mapsto (\Delta_A\Lambda^2 +\Lambda^2\Delta A - 2\Lambda \Delta_A \Lambda, 2\Delta_R\Lambda^2)$, which 
has eigenvalues $(\lambda_i -\lambda_j)^2$, with the eigenspaces consisting of matrices $A$ with $A_{[ij]}=-A_{[ji]}^{\sfT}\neq 0$, and the remaining blocks of $A$ are zeros, and $2\lambda_i^2$, corresponding to block $R_{[i]}$.
\end{lemma}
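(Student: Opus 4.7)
The plan is to Taylor-expand the explicit expression for $\ttF_{\FFl}$ in \eqref{eq:ttFccBig} around $(A,R)=(0,0)$ to second order and read off the Hessian bilinear form. At this base point $\hat A = 0$ and $(1-2\alpha)A = 0$, so each of the four exponential factors in the expression contributes $\dI + L + \frac{1}{2}L^2 + O(t^3)$, where $L$ is one of $\Delta_{\hat A}, \Delta_B, -\Delta_B, -\Delta_{\hat A}$, with $\Delta_B := (1-2\alpha)\Delta_A$, using that $\Delta_{\hat A}^\sfT = -\Delta_{\hat A}$ and $\Delta_B^\sfT = -\Delta_B$. The quadratic terms in the expanded product consist of four ``diagonal'' pieces (one factor supplies $L^2/2$, the others supply $\dI$) and six ``cross'' pieces (two distinct factors each supply their linear term, preserving order). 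Multiplying by $-\Tr H$ and doubling (the Hessian bilinear form equals twice the $t^2$ coefficient of $\ttF_{\FFl}(tA, tR)$) produces ten traces to assemble.

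The next step is the bookkeeping of organizing these ten traces into the three lines of \eqref{eq:Hess_zero_flag}. The two outer diagonal contributions together with the $(\exp\hat A, \exp(-\hat A))$ cross term combine into the first line after noting $(-\Delta_{\hat A})(-\Delta_{\hat A}) = \Delta_{\hat A}^2$. The four cross terms mixing $\Delta_{\hat A}$ with $\Delta_B$ pair up, the signs arising from the third and fourth perturbations $-\Delta_B$ and $-\Delta_{\hat A}$ being exactly what produces the commutator $\Delta_A\Lambda - \Lambda\Delta_A$ on one side and $\Lambda\Delta_A - \Delta_A\Lambda$ on the other, giving the second line after extracting $(1-2\alpha)$ from $\Delta_B$. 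The two inner diagonal pieces plus the $(\exp B, \exp(-B))$ cross term yield the third line after cycling the trace to pull $\dI_{n_q+k,n_q}^\sfT H \dI_{n_q+k,n_q} = H_{11}$ to the front and extracting $(1-2\alpha)^2$. This sign-and-transpose tracking is the most delicate portion, but each individual piece is routine.

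For the case $\bZ = \tidY$, the identities $\tidY^\sfT \bZ = \dI_{n_q}$ and $Q^\sfT \bZ = 0$ give $K^\sfT \bZ = \dI_{n_q+k,n_q}$, so $H = \dI_{n_q+k,n_q}\Lambda\dI_{n_q+k,n_q}^\sfT$ has only a nonzero $\Lambda$ block in the top left. Substituting into \eqref{eq:Hess_zero_flag} and using this block structure, every trace reduces to a trace over the top-left $n_q\times n_q$ subblock of the enclosed matrix. The top-left block of $\Delta_{\hat A}^2$ equals $4\alpha^2\Delta_A^2 - \Delta_R^\sfT\Delta_R$, which simultaneously supplies the $\Delta_A$ and $\Delta_R$ contributions. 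Lines 1, 2, 3 then contribute to the combination $\Tr\Lambda^2\Delta_A^2 - \Tr\Lambda\Delta_A\Lambda\Delta_A$ with respective coefficients $-8\alpha^2$, $-8\alpha(1-2\alpha)$, $-2(1-2\alpha)^2$; these sum to $-2$ thanks to the identity $4\alpha^2 + 4\alpha(1-2\alpha) + (1-2\alpha)^2 = 1$, while the $\Delta_R$ part $2\Tr\Lambda^2\Delta_R^\sfT\Delta_R$ comes only from line 1. Rewriting $\Delta_A^2 = -\Delta_A^\sfT\Delta_A$ and $\Lambda\Delta_A\Lambda\Delta_A = -\Delta_A^\sfT\Lambda\Delta_A\Lambda$ recasts the result as \eqref{eq:hess_zero_flag2}.

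Finally, the operator identification proceeds by polarizing the bilinear form against the Frobenius inner product on $\AHerm{\sfT,\bd}\times \R^{k\times n_q}$. On the $(i,j)$-block of $\Delta_A$ (with $i\neq j$), left multiplication by $\Lambda$ acts as the scalar $\lambda_i$ and right multiplication as the scalar $\lambda_j$, so $\Delta_A\mapsto \Delta_A\Lambda^2 + \Lambda^2\Delta_A - 2\Lambda\Delta_A\Lambda$ acts by the scalar $\lambda_j^2 + \lambda_i^2 - 2\lambda_i\lambda_j = (\lambda_i-\lambda_j)^2$; the constraint $A_{[ji]} = -A_{[ij]}^\sfT$ couples the $(i,j)$ and $(j,i)$ blocks into a single real eigenspace per unordered pair $\{i,j\}$ with $i\neq j$. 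On the $i$-th column block of $\Delta_R$, right multiplication by $\Lambda^2$ is the scalar $\lambda_i^2$, hence $\Delta_R\mapsto 2\Delta_R\Lambda^2$ has eigenvalue $2\lambda_i^2$ on that block, as claimed.
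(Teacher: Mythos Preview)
Your proposal is correct and follows essentially the same approach as the paper: Taylor-expand \eqref{eq:ttFccBig} to second order at $(A,R)=(0,0)$, organize the resulting terms into the three lines of \eqref{eq:Hess_zero_flag} via trace cyclicity, specialize to $Z=\tidY$ where $H=\diag(\Lambda,0)$, and collapse the $\alpha$-dependence using $4\alpha^2+4\alpha(1-2\alpha)+(1-2\alpha)^2=1$. Your explicit accounting of the ten second-order terms (four diagonal, six cross) and how they group into the three displayed lines is more detailed than the paper's own proof, which simply writes down the result of the expansion; but the method and the subsequent simplification to \eqref{eq:hess_zero_flag2} and the block-by-block eigenvalue reading are identical.
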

In practice, we choose the sequence $\lambda_i$ as consecutive integers centered around $0$, to make the eigenvalues of the Hessian less spread out, and not too small.
\begin{proof}The Hessian is twice the second-order terms of the Taylor series expansion of $\ttF_{\FFl}(A+\Delta_A, R+\Delta_R)$ at $(A, R) = (0, 0)$ from \cref{eq:ttFccBig} and is equal to
  $$\begin{gathered} -\Tr H\{\Delta_{\hat{A}}^2 \dI_{n_q + k, n_q}\Lambda \dI_{n_q + k, n_q}^{\sfT} + \dI_{n_q + k, n_q}\Lambda \dI_{n_q + k, n_q}^{\sfT}\Delta_{\hat{A}}^2 -2\Delta_{\hat{A}}\dI_{n_q + k, n_q}\Lambda\dI_{n_q + k, n_q}^{\sfT}\Delta_{\hat{A}}\}
    \\-2(1-2\alpha)\Tr H\{\Delta_{\hat{A}}\dI_{n_q + k, n_q}(\Delta_A\Lambda - \Lambda\Delta_A)\dI_{n_q + k, n_q}^{\sfT}
+\dI_{n_q + k, n_q}(\Lambda\Delta_A - \Delta_A\Lambda)\dI_{n_q + k, n_q}^{\sfT}\Delta_{\hat{A}}\} \\
- (1-2\alpha)^2 \Tr H\{\dI_{n_q + k, n_q}(\Delta_A^2\Lambda + \Lambda\Delta_A^2 -2\Delta_A\Lambda\Delta_A) \dI_{n_q + k, n_q}^{\sfT}
\}
  \end{gathered}$$
Using the permutation property of trace we can rewrite the last line of the above as
$(1-2\alpha)^2\Tr H_{11}(\Delta_A^2\Lambda + \Lambda \Delta_A^2 - 2\Delta_A\Lambda \Delta_A)$. Together with the other terms we get \cref{eq:Hess_zero_flag}. When $\bZ =\tidY$, $A=0, R=0$, with $\tidY^{\sfT}Q = 0$, $H$ reduces to $\diag(\Lambda, 0_{k, k})$, and the Hessian becomes
$$\begin{gathered}-\Tr\Lambda \{(4\alpha^2\Delta_A^2-\Delta_R^{\sfT}\Delta_R)\Lambda+\Lambda(4\alpha^2\Delta_A^2-\Lambda\Delta_R^{\sfT}\Delta_R) -8\alpha^2\Delta_A\Lambda\Delta_A\}\\
  -2(1-2\alpha)\Tr\Lambda\{2\alpha\Delta_A(\Delta_A\Lambda - \Lambda\Delta_A) + 2\alpha(\Lambda\Delta_A - \Delta_A\Lambda)\Delta_A\} \\
  - (1-2\alpha)^2\Tr\Lambda\{2\Delta_A^2\Lambda - 2\Delta_A\Lambda\Delta_A\}\\
  =\Tr\{-8\alpha^2 -8\alpha(1-2\alpha)-2(1-2\alpha)^2\}\Lambda^2\Delta_A^2   +\Tr\{8\alpha^2 +8\alpha(1-2\alpha) + 2(1-2\alpha)^2\}\Delta_A\Lambda\Delta_A\Lambda\\ +2\Tr\Lambda\Delta_R^{\sfT}\Delta_R\Lambda
\end{gathered} 
$$
which reduces to \cref{eq:hess_zero_flag2}. The operator form and the eigenvalue results follow. \qed
\end{proof}  
\section{Calculation of the Riemannian center of mass}
We apply our proposed logarithm map to compute the Riemannian center of mass \cite{Karcher,Pennec2}. The objective is to minimize $\frac{1}{N}\sum_{i=1}^N d(X, Q_i)^2$ in the variable $X$ in a manifold $\rM$ where $Q_i$'s are given points in $\rM$ ($d$ denotes the geodesic distance). We will assume data is in a sufficiently small cluster and deal with local minima only.

Here, we use the gradient algorithm as considered in \cite{Pennec2}. In our experiment, the data are generated randomly at a distance between $0.4\pi$ and $0.5\pi$ around a center point. The center of mass of $N=50$ points $Q_1,\cdots, Q_N$ is computed by the iteration $X_{i+1} = \Exp_{X_{i}}\frac{1}{N}\Log_{X_{i}}Q_i$. After four iterations, the gradient $\frac{1}{N}\Log_{X_{i}}Q_i$ is less than $5E^{-5}$. The result is summarized in \cref{tab:RCM}. Each gradient iteration performs $N$ computations of Riemannian logarithms by our algorithm. The mean square distance is reduced from $4.0$ to $1.9$ after four iterations for the Stiefel manifold $\St{\R}{50}{100}$, and $3.9$ to $2.0$ for the comparable flag manifold $\Flag(30, 45, 50; 1000)$. The whole calculation finishes after a few minutes for each manifold on a quad-core Xeon(R) E3-1225 @3.20GHz.

\begin{table}
  \centering
  \begin{tabular}{|c| c| c| c| c| c| c|}
\hline
\multicolumn{1}{|c|}{} & \multicolumn{3}{|c|}{Stiefel $\St{\R}{50}{100}$ } & \multicolumn{3}{|c|}{$\Flag(30, 45, 50; 1000)$ }\\
\hline
\backslashbox{iter}{$\alpha$ } &0.10&0.50&1.00&0.10&0.50&1.00\\
\hline
0 & 0.157 & 0.160 & 0.163 & 0.134 & 0.135 & 0.137\\
1 & -1.729 & -1.740 & -1.743 & -1.728 & -1.733 & -1.735\\
2 & -3.614 & -3.639 & -3.650 & -3.587 & -3.600 & -3.606\\
3 & -5.493 & -5.535 & -5.556 & -5.441 & -5.462 & -5.474\\
\hline
\end{tabular}
  \caption{Riemannian center of mass. $\Log_{10}$ of the gradient of mean square distance function for four iterations on 50 data points.}
\label{tab:RCM}
\end{table}
\section{Implementation}The exponential maps are implemented in the $exp$ method of classes {\it RealStiefel, ComplexStiefel, RealFlag, ComplexFlag} in module {\it ManNullRange.manifolds} in \cite{Nguyen2020riemann}. The logarithm maps are implemented by the $log$ method in {\it RealStiefel, RealFlag}. The notebooks {\it StiefelLogarithm, FlagLogarithm} in folder {\it colab} in \cite{Nguyen2020riemann} contain the basic framework for the calculations discussed here.

\section{Conclusion}In this article, we provide efficient, closed-form geodesics formulas of a family of metrics on the Stiefel manifold, extending previously known geodesic formulas often used in optimization. We introduce a novel method to compute a candidate for the logarithm map and show that the method is robust for both Stiefel and flag manifolds. We expect the method, using Fr{\'e}chet derivatives to compute the gradient of the distance function to apply to other manifolds. The method is effective for different parameter values of the metric, thus allowing one to adjust the metric to the problem of interest. We showed it could be used to compute the Riemannian center of mass instead of an approximate geodesic logarithm.

The injectivity radii of Stiefel and flag manifolds are not known for the full family of metrics, although we have provided an analysis based on closed geodesics in the Stiefel case. A more precise estimate of the radii will require an estimate of their sectional curvatures and will be a topic of future research. We hope further works will lead to refinements of the proposed algorithms for the logarithm map. We expect the metrics studied here will find further applications in computer vision, statistics, and other practical problems.
\begin{appendices}
\end{appendices}
%


%
%

\bibliographystyle{spmpsci_unsrt}      
\bibliography{LogStief}   

%
%
\end{document}